\title{An Expandable Local and Parallel Two-Grid Finite Element Scheme}
\author{Yanren Hou\thanks{School of Mathematics and Statistics, Xi'an Jiaotong University,
        Xi'an, Shaanxi 710049, China. ({\tt yrhou@mail.xjtu.edu.cn})}, GuangZhi Du\thanks{School of Mathematics and Statistics, Xi'an Jiaotong University,
        Xi'an, Shaanxi 710049, China. }}
\date{}
\newtheorem{theorem}{\bf Theorem}[section]
\newtheorem{lemma}{\bf Lemma}[section]
\newtheorem{corollary}{\bf Corollary}[section]
\def\proof{\noindent{\bf Proof.}\hskip2mm}
\def\endproof{\hfill$\Box$}
\numberwithin{equation}{section}
\begin{document}
\maketitle
\begin{abstract}
An expandable local and parallel two-grid finite element scheme based on superposition principle for elliptic problems is proposed and analyzed in this paper by taking example of Poisson equation. Compared with the usual local and parallel finite element schemes, the scheme proposed in this paper can be easily implemented in a large parallel computer system that has a lot of CPUs. 
Convergence results base on $H^1$ and $L^2$ a priori error estimation of the scheme are obtained, which show that the scheme can reach the optimal convergence orders within $|\ln H|^2$ or $|\ln H|$ two-grid iterations if the coarse mesh size $H$ and the fine mesh size $h$ are properly configured in 2-D or 3-D case, respectively. Some numerical results are presented at the end of the paper to support our analysis.
\end{abstract}
\noindent{\bf Key Words} two-grid finite element method, domain decomposition method, local and parallel algorithm, error estimation

\noindent{\bf MSC2000} 65N15, 65N30, 65N55

\section{Introduction}

Two-grid or multi-grid finite element methods and domain decomposition methods are powerful tools for numerical simulation of solutions to PDEs with high resolution, which are otherwise inaccessible due to the limits in
computational resources. For examples, the domain decomposition schemes, nonlinear Galerkin schemes and two-grid/two-level post-processing schemes in \cite{AMMI,  BANK, CHAN, FOIAS, HACK, HOULI1, HOULI2, LIHOU, LIUHOU, XU} and the references therein. In the past decade, a local and parallel two-grid finite element method for elliptic boundary value problems was initially proposed in \cite{XU1} and was extended to nonlinear elliptic boundary value problems in \cite{XU2} and Stokes and Navier-Stokes equations in \cite{HEXU, HEXU1}.

Let us briefly recall the local and parallel two-grid finite element method in \cite{XU1} for the following simple Poisson equation with Dirichlet boundary condition defined in convex domain $\Omega\subset R^d$, $d=2,3$:
\begin{equation}\label{equ}
\left\{\begin{array}{rl}
-\Delta u=f, & \mbox{ in }\Omega,\\
u=0, & \mbox{ on }\partial\Omega,
\end{array}\right.
\end{equation}
whose weak formulation is: find $u\in H_0^1(\Omega)$ such that
\begin{equation}\label{wequ}
a(u,v)=(f,v),\quad\forall v\in H_0^1(\Omega).
\end{equation}

Suppose $T^H(\Omega)$ is a regular coarse mesh triangulation of $\Omega$ and $S^H(\Omega)\subset H^1(\Omega)$, $S_0^H(\Omega)=S^H(\Omega)\cap H_0^1(\Omega)$ are the corresponding finite element spaces defined on $T^H(\Omega)$ . Let us decompose the entire domain $\Omega$ into a series of disjoint subdomains, $\overline{\Omega}=\bigcup\limits_{j=1}^N \overline{D_j}$. For example, see Fig. \ref{FIGXU}.
\begin{center}
 \begin{figure}[ht]
  \centering
   \includegraphics[width=0.5\linewidth]{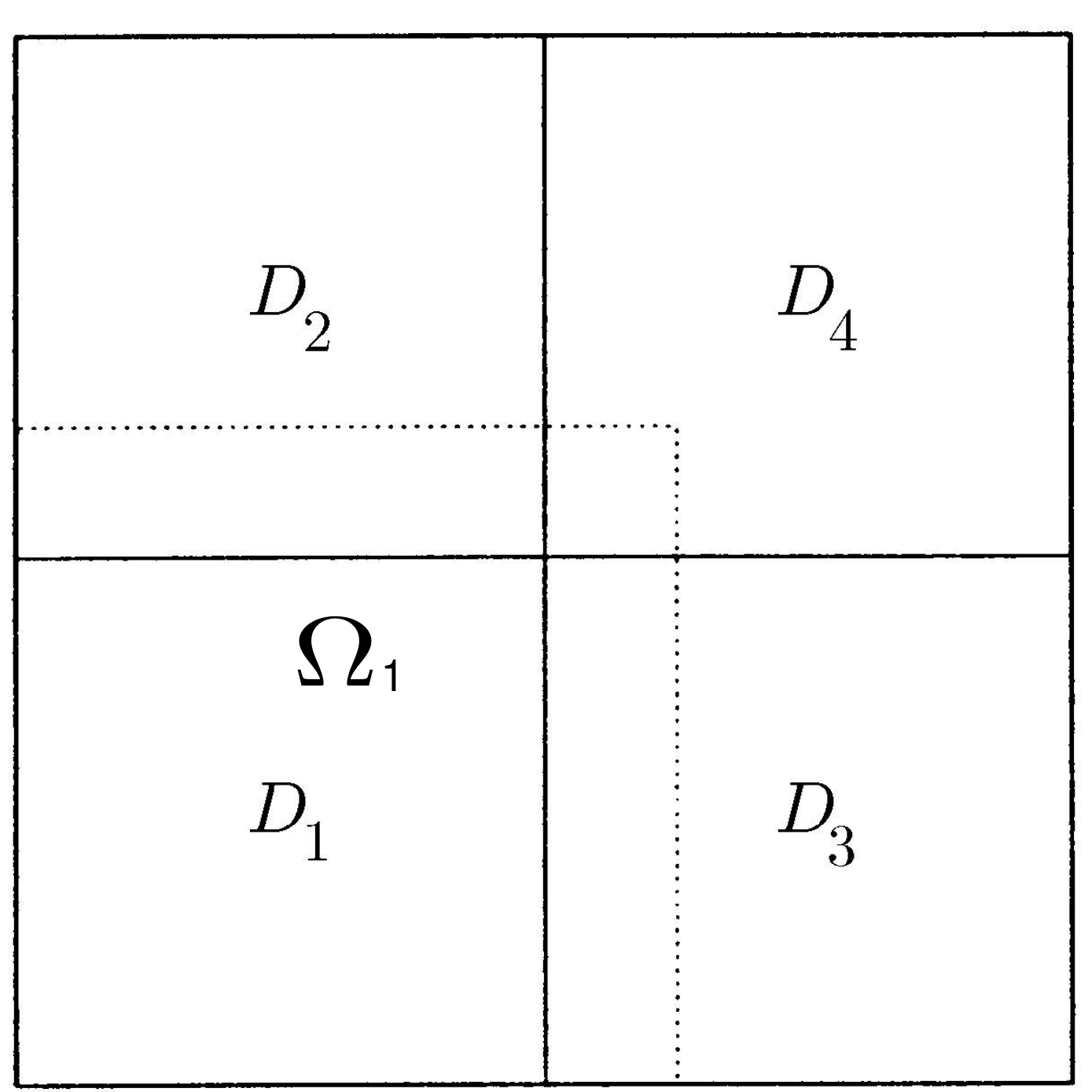}
   \caption{Decomposition of the domain $\Omega$}\label{FIGXU}
 \end{figure}
\end{center}
If the coarse mesh standard Galerkin approximation $u_H\in S_0^H(\Omega)$ is obtained, by expanding each subdomain $D_j$ to another subdomain $\Omega_j\subset\Omega$ and for a given fine mesh size $h<H$, one of the local and parallel two-grid schemes proposed in \cite{XU1} is: find $e_h^j\in S_0^h(\Omega_j)$ such that
\begin{equation}\label{XU1}
a(e_h^j,v)=(f,v)-a(u_H,v),\quad\forall v\in S_0^h(\Omega_j).
\end{equation}
And the final approximation $u^h$ is defined piecewisely by
$$
u^h=u_H+e_h^j,\quad\mbox{in}\quad D_j,\quad j=1,2,\cdots,N.
$$

Error estimations in \cite{XU1} shows that $u^h$ can reach the optimal convergence order in $H^1$ norm.
However, it is obvious that $u^h$ is in general discontinuous and its $L^2$ error bound does not in general have higher order than its $H^1$ error bound. To overcome this defect of the algorithm, the authors in \cite{XU1} modified the above scheme to ensure the continuity of $u^h$ in $\Omega$ and finally do a coarse grid correction to get the optimal error bound in $L^2$ norm. The most attractive feature of the algorithm is that the series of subproblems are independent once $u_H$ is known and therefore it is a highly parallelized algorithm. 

On the other hand, one can easily see from the content of \cite{XU1} that their error estimates heavily depend on the usage of the superapproximation property of finite element spaces. 
Thanks to \cite{NITSCHE}, we know that the usage of this property makes the error constant appeared in \cite{XU1} has the form $O(t^{-1})$, where $t=\mbox{dist}(\partial D_j\backslash\partial\Omega,\partial\Omega_j\backslash\partial\Omega)$. To guarantee the error orders obtained in \cite{XU1}, one should demand that
$t=O(1)$. That means the distance between the boundaries of a specific subdomain $D_j$ and its expansion $\Omega_j$ should be of constant order.  Therefore $\Omega_j$ could not be arbitrary small even when $\mbox{diam}(D_j)$ tends to zero. This will lead to a vast waste of parallel computing resources.

In this paper, we follow the basic idea presented in \cite{XU1} to construct another form of two-grid local and parallel scheme, in which the scale of each subproblem can be much smaller compared with that in \cite{XU1}. In fact, we will deal with the case of $\mbox{diam}(D_j)=O(H)$ and $t=O(H)$. We call the scheme an expandable local and parallel two-grid scheme because the scale of each subproblem can be arbitrary small as $H$ tends to zero and every two adjacent subproblems only have a small overlapping. Similarly, to get a better $L^2$ error bound, a coarse grid correction is done in each cycle of two-grid iteration.

Different from the previously mentioned local and parallel schemes, we use superposition principle to generate a series of local and independent subproblems and this will make the global approximation continuous in $\Omega$. Such kind of technique has been successfully used in \cite{LARSON, LARSON1}, in which adaptive variational multi-scale methods were constructed. In fact, the scheme in this paper is quite similar to the variational multi-scale schemes in the two references. The difference is the schemes presented in \cite{LARSON, LARSON1} are adaptive schemes based upon some a posterior error estimates and therefore some boundaries related problems have to be solved. Another contribution of this paper compared with \cite{LARSON, LARSON1} is that a priori error estimate of the scheme is obtained and for patches of given size, our analysis show that a few iterations, say $O(|\ln H|^2)$ or $O(|\ln H|)$ in 2-D or 3-D respectively, will generate an approximation with same accuracy as the fine mesh standard Galerkin approximation. In addition, following the idea of partition of unity method (see \cite{BABUSKA}), authors in \cite{WANG, ZHENG} proposed a local and parallel two-grid scheme for second order linear elliptic equations in 2-D case based on the scheme presented in \cite{XU1}. Although the usage of partition of unity method makes the global approximation continuous, but their error estimation is still based on the superapproximation property of the finite element space and therefore the distance $t$, theoretically, must be constant order to guarantee their estimations.  

The rest of this paper is organized as follows. In the coming section, some preliminary materials are provided. In section 3, local and parallel scheme
is constructed. Error estimates in both $H^1$ and $L^2$ norms are obtained for the scheme in section 4. Finally, some numerical experiments are given to support our analysis in section 5.

\section{Preliminaries}

In this paper, for the sake of simplicity of analysis, we only consider the case of the Poisson equation (\ref{equ}) and we can get similar results for general linear elliptic problems as the Poisson equation with little modifications. 

For a bounded convex domain $\Omega\subset R^d$, $d=2,3$, we use the standard notations for Sobolev spaces $W^{s,p}(\Omega)$ and their
associated norms, see, e.g., \cite{ADAMS} and \cite{CIARLET}. For $p=2$, we denote $H^s(\Omega)=W^{s,2}(\Omega)$ and
$H_0^1(\Omega)=\{v\in H^1(\Omega): v|_{\partial\Omega}=0\}$, $\|\cdot\|_{s,\Omega}=\|\cdot\|_{s,2,\Omega}$ and $|\cdot|_{s,\Omega}$ the corresponding semi-norm.
In some places of this paper, $\|\cdot\|_{s,\Omega}$ should be viewed as piecewisely defined if it is necessary. For simplicity, the following symbols $\lesssim$, $\gtrsim$ and $\approxeq$ will be used in this paper. In the rest, $x_1\lesssim y_1$, $x_2\gtrsim y_2$ and $x_3\approxeq y_3$,
mean that $x_1\leq C_1y_1$, $x_2\geq c_2y_2$ and $c_3x_3\leq y_3\leq C_3x_3$ for some constants $C_1$, $c_2$, $c_3$ and $C_3$ that are independent of mesh
size, $x_i$, $y_i$ and local domains which will be introduced in the following sections. In the following, we denote by $(\cdot,\cdot)$ the $L^2-$inner product on $\Omega$. Thus, $\|\cdot\|_{0,\Omega}=(\cdot,\cdot)^\frac12$
and, in $H^1_0(\Omega)$, we know that $\|\cdot\|_{1,\Omega}\approxeq \|\nabla\cdot\|_{0,\Omega}$. For simplicity of expression, we use $\|\cdot\|_{\Omega}$ to denote $\|\cdot\|_{0,\Omega}$ in the rest. For subdomains $S_1\subset S_2\subset\Omega$, $S_1\subset\subset S_2$ means that $\mbox{dist}(\partial S_2\backslash\partial\Omega, \partial S_1\backslash\partial\Omega)>0$. 

For any given $S_1\subset\Omega$, we denote by $(\cdot,\cdot)_{S_1}$ the $L^2-$inner product on $S_1$
$$
a(u,v)_{S_1}=(\nabla u,\nabla v)_{S_1},\quad a(u,v)=(\nabla u,\nabla v)_\Omega,
$$
in the rest of this paper. Then we get the weak form (\ref{wequ}).

It is obvious that
\begin{equation}\label{CC}
\|\nabla u\|_{S_1}^2=a(u,u)_{S_1}\leq \|\nabla u\|_{S_1}\|\nabla v\|_{S_1}.
\end{equation}

We assume that $T^H(\Omega)=\{\tau_\Omega^H\}$ is a regular triangulation of $\Omega$. Here
$H=\max\limits_{\tau_\Omega^H\in T^H(\Omega)}\{\mbox{diam}(\tau_\Omega^H)\}$ is the mesh size parameter.
Let
$$S^H(\Omega)=\{v_H\in C^0(\Omega): v_H|_{\tau_\Omega^H}\in P^r_{\tau_\Omega^H},\;\forall\tau_\Omega^H\in T^H(\Omega)\},$$
be a $C^0-$finite element space defined on $\Omega$ and $S_0^H(\Omega)=S^H(\Omega)\cap H^1_0(\Omega)$,
where $r\geq 1$ is a positive integer and $P^r_{\tau_\Omega^H}$ is the space
of polynomials of degree not greater than $r$ defined on $\tau_\Omega^H$.
Given $S_1\subset\Omega$, which aligns with $T^H(\Omega)$, we define $T^H(S_1)$ and $S^H(S_1)$ to be the restriction
of $T^H(\Omega)$ and $S^H(\Omega)$ on $S_1$.

For these finite element spaces and problem (\ref{wequ}), we make the following assumptions.
\begin{itemize}
\item[{\bf A1}] \emph{Interpolant}. There is a finite element interpolation $I_H$ defined on $S^H(\Omega)$ and we denote
 $\hat I_H=I-I_H$ such that for any $w\in H^s(\tau_\Omega^H)$, $0\leq m\leq s\leq r+1$,
$$
\|\hat I_Hw\|_{m,\tau_\Omega^H}\lesssim H^{s-m}|w|_{s,\tau_\Omega^H}.
$$
\item[{\bf A2}.] \emph{Inverse Inequality}. For any $w\in S^H(\Omega)$,
$$\|w\|_{1,\Omega}\lesssim H^{-1}\|w\|_{\Omega},$$
\item[{\bf A3}.] \emph{Regularity}. For any $f\in L^2(\Omega)$,  the solutions of
$$
a(u,v)=(f,v)\quad\forall v\in H_0^1(\Omega),
$$
satisfies
$$
\|u\|_{2,\Omega}\lesssim \|f\|_{L^2(\Omega)}.
$$
\end{itemize}


Now let us state the standard Galerkin equation of (\ref{wequ}): find $u_H\in S_0^H(\Omega)$ such that
\begin{equation}\label{SGM}
a(u_H,v_H)=(f,v_H),\quad\forall v_H\in S_0^H(\Omega).
\end{equation}
And it is classical that
\begin{equation}\label{ASGM}
\|u-u_H\|_{\Omega}+H\|\nabla(u-u_H)\|_{\Omega}=O(H^{r+1}),
\end{equation}
if $u\in H_0^1(\Omega)\cap H^{r+1}(\Omega)$, $r\geq 1$.

\section{Local and Parallel Two-Grid Scheme}

Let us denote
$$
\hat w=u-u_H\in H_0^1(\Omega).
$$
Then the residual equation is
\begin{equation}\label{errequ}
a(\hat w,v)=(f,v)-a(u_H,v),\quad\forall v\in H_0^1(\Omega).
\end{equation}
Assume that $\{\phi_j\}_{j=1}^N$ is a partition of unity on $\Omega$ for given integer $N\geq 1$ such that
$\Omega\subset\bigcup\limits_{j=1}^N\mbox{supp }\phi_j$
and $\sum\limits_{j=1}^N\phi_j\equiv 1$ on $\Omega$. In the rest of the paper, we denote $D_j=\mbox{supp}\,\phi_j$ and always assume that $D_j$ aligns with $T^H(\Omega)$. We can rewrite (\ref{errequ}) as
\begin{equation*}
a(\hat w,v)=(f,\sum\limits_{j=1}^N\phi_j v)-a(u_H,\sum\limits_{j=1}^N\phi_j v),\quad\forall v\in H_0^1(\Omega).
\end{equation*}
By superposition principle, the above residual equation is equivalent to the summation of the following subproblems:
\begin{equation}\label{errequ2}
a(\hat w^j,v)=(f,\phi_j v)-a(u_H,\phi_j v),\quad\forall v\in H_0^1(\Omega),\;j=1,2,\cdots,N.
\end{equation}
That is $\hat w=\sum\limits_{j=1}^N \hat w^j$. Each subproblem is a "local residual" equation with homogeneous Dirichlet boundary condition, which is driven by right-hand-side term of a very
small compact support, and all the subproblems are independent once $u_H$ is known. To discretize and localize the "local
residual" equation and therefore to reduce the computational scale, we restrict the above subproblem in a local domain $\Omega_j$, which
contains $D_j$ and is also assumed to be aligned with $T^H(\Omega)$.
For each local subdomain $\Omega_j$, we assume that $T^h(\Omega_j)=\{\tau^h_{\Omega_j}\}$ is a regular triangulation
on it. Here $h=\max\limits_{1\leq j\leq N}\max\limits_{\tau^h_{\Omega_j}\in T^h(\Omega_j)}\{\mbox{diam}(\tau^h_{\Omega_j})\}$. For simplicity, the local fine mesh $T^h(\Omega_j)$ is defined as follows throughout the rest of the paper. For a global regular triangulation $T^h(\Omega)=\{\tau^h_\Omega\}$  on $\Omega$ which aligns with $T^H(\Omega)$, we define $T^h(\Omega_j)=T^h(\Omega)|_{\Omega_j}$. For this mesh
parameter $h$($h<H$), we introduce following fine mesh
finite element spaces $S^{h}(\Omega_j)$, $S_0^{h}(\Omega_j)$ and $S^h(\Omega)$, $S_0^h(\Omega)$, which have the
same definitions as $S^H(\Omega)$ and $S_0^H(\Omega)$ given in the previous section. Since the functions in
$S_0^h(\Omega_j)$ can be extended to functions in $S_0^h(\Omega)$ with zero value outside $\Omega_j$, we regard
$S_0^h(\Omega_j)$ as a subspace of $S_0^h(\Omega)$ in the sense of such zero extension.
Since $\Omega_j$ and $T^h(\Omega)$ align with $T^H(\Omega)$, we always assume
\begin{equation}\label{hierarchical}
S^H(\Omega)\subset S^h(\Omega),\quad S_0^H(\Omega)\subset S_0^h(\Omega)=\bigcup\limits_{1\leq j\leq N}S_0^h(\Omega_j).
\end{equation}

Now we give the approximate "local residual" equation as follows: find
$\hat w^j_{H,h}\in S_0^{h}(\Omega_j)$ such that
\begin{equation}\label{alerrequ}
a(\hat w^j_{H,h},v)=(f,\phi_j v)-a(u_H,\phi_j v),\quad\forall v\in S_0^h(\Omega_j),\;j=1,2,\cdots,N.
\end{equation}
It is clear that all subproblems in (\ref{alerrequ}) are independent. Note that $\hat w^j_{H,h}$ can be extended to the entire domain $\Omega$ with zero value outside $\Omega_j$ in $H_0^1(\Omega)$, we still use
$\hat w^j_{H,h}$ to denote such extension in the rest and we denote
$$\hat w_{H,h}=\sum\limits_{j=1}^N \hat w^j_{H,h}.$$
Now we define the following intermediate approximate solution
\begin{equation}\label{post1}
u_{H,h}=u_H+\hat w_{H,h}.
\end{equation}

Since the approximation $u_{H,h}$ is obtained by solving a series of local subproblems which are imposed
with artificial homogeneous boundary conditions of the first kind, some local non-physical oscillation may occur. This
will certainly bring some bad influence to the global accuracy of the approximation. To diminish such influence,
we choose to smooth the above intermediate approximation $u_{H,h}$ by following coarse grid correction:
find $E_H\in S_0^H(\Omega)$ such that
\begin{equation}\label{post2}
a(E_H,v)=(f,v)-a(u_{H,h},v)\quad\forall v\in S_0^H(\Omega).
\end{equation}
And the final approximate solution is defined as
\begin{equation}\label{final}
u_H^h=u_{H,h}+E_H=u_H+\hat w_{H,h}+E_H.
\end{equation}

Now for the implementation of the proposed local and parallel two-grid
scheme (\ref{alerrequ})-(\ref{final}), we have to choose a proper partition of unity $\{\phi_j\}_{j=1}^N$ of $\Omega$ and its associated computational domain $\Omega_j$. A simple choice of the partition of unity is the piecewise linear Lagrange basis functions associated with the coarse grid triangulation $T^H(\Omega)$, where $N$ is the number of vertices of $T^H(\Omega)$ including the boundary vertices. 
For each vertex $j$ of the coarse grid, let us denote $D_j=\mbox{supp }\phi_j$. Then we expand $D_j$ by one coarse mesh layer to get $\Omega_j$, that is
$$
\Omega_j=\bigcup_{x_i\in D_j} D_i,
$$
where $x_i$ denotes the $i$th vertex of the coarse mesh triangulation. It is clear that
\begin{equation}\label{3281411}
\mbox{diam}(D_j),\;\mbox{dist}(\partial D_j,\partial\Omega_j)\approxeq H.
\end{equation}

\section{Error Estimates}
By the idea of fictitious domain method (see \cite{GLOWINSKI}), we extend the local sub-problem (\ref{alerrequ}) to $\Omega$.
Let us denote $\Gamma=\partial\Omega$ and $\Gamma_j=\partial\Omega_j\backslash\Gamma$.  If we introduce $H_h^\frac12(\Gamma_j)=S_0^h(\Omega)|_{\Gamma_j}\subset H^\frac12(\Gamma_j)$ and $H^{-\frac12}_h(\Gamma_j)=(H^\frac12_h(\Gamma_j))'$ 
which is equipped with the following norm
$$
\|\mu\|_{H^{-\frac12}_h(\Gamma_j)}=\sup\limits_{v\in H_h^\frac12(\Gamma_j)}\frac{\int_{\Gamma_j}v\mu}{\|v\|_{H_h^\frac12(\Gamma_j)}},
$$
we can show that the local residual $\hat w_{H,h}^j\subset S_0^h(\Omega)$ satisfies the following saddle point problem: find $(\hat w_{H,h}^j,\xi^j)\in S_0^h(\Omega)\times H_h^{-\frac12}(\Gamma_j)$ such that $\forall (v,\mu)\in S_0^h(\Omega)\times H_H^{-\frac12}(\Gamma_j)$
\begin{equation}\label{fictitious}
a(\hat w^j_{H,h},v)+<\xi^j,v>_j+<\mu,\hat w^j_{H,h}>_j=(f,\phi_j v)-a(u_H,\phi_jv),
\end{equation}
where
$$
<\mu,v>_j=\int_{\Gamma_j}\mu vds\quad\forall \mu\in H^{-\frac12}_h(\Gamma_j),\;v\in S_0^h(\Omega).
$$

To show the well-posedness of the above saddle point problem,
let us introduce following two finite element spaces
\begin{eqnarray*}
&&S_E^h(\Omega_j)=\{v\in S^h(\Omega_j): v|_{\partial\Omega_j\backslash\Gamma_j}=0\},\\
&&S_E^h(\Omega\backslash\Omega_j)=\{v\in S^h(\Omega\backslash\Omega_j): v|_{\partial(\Omega\backslash\Omega_j)\backslash\Gamma_j}=0\}.
\end{eqnarray*}
For given $g\in H_h^\frac12(\Gamma_j)$, we introduce two auxiliary problems
$$
a(u_1,v)_{\Omega_j}=0,\quad u_1|_{\Gamma_j}=g\quad\forall v\in S_0^h(\Omega_j),
$$
and
$$
a(u_2,v)_{\Omega\backslash\Omega_j}=0,\quad u_2|_{\Gamma_j}=g\quad\forall v\in S_0^h(\Omega\backslash\Omega_j).
$$
These two problems define two mappings $\gamma_1^{-1}$ and $\gamma_2^{-1}$ from $H_h^\frac12(\Gamma_j)$ into $S_E^h(\Omega_j)$ and $S_E^h(\Omega\backslash\Omega_j)$, respectively. That is
$$
u_1=\gamma_1^{-1}g,\quad u_2=\gamma_2^{-1}g.
$$
And we know that
$$
\|\gamma_1^{-1}g\|_{H^1(\Omega_j)},\;\|\gamma_2^{-1}g\|_{H^1(\Omega\backslash\Omega_j)}\lesssim \|g\|_{H^\frac12(\Gamma_j)}.
$$
Then we can define an operator $\gamma^{-1}$ from $H_h^\frac12(\Gamma_j)$ into $S_0^h(\Omega)$: for any given $g\in H_h^\frac12(\Gamma_j)$
$$
\gamma^{-1}g=\left\{\begin{array}{ll}
\gamma_1^{-1}g,\quad &\mbox{in}\;\Omega_j,\\
\gamma_2^{-1}g,\quad &\mbox{in}\;\Omega\backslash\Omega_j.
\end{array}\right.
$$
And we have the following property of $\gamma^{-1}$:
$$
\|\gamma^{-1}g\|_{H_0^1(\Omega)}\lesssim \|g\|_{H^\frac12(\Gamma_j)}\quad\forall g\in H_h^\frac12(\Gamma_j).
$$

Now for any $\mu\in H_h^{-\frac12}(\Gamma_j)$, we have
\begin{eqnarray*}
&&\|\mu\|_{H_h^{-\frac12}(\Gamma_j)}=\sup\limits_{g\in H_h^\frac12(\Gamma_j)}\frac{<\mu,g>_j}{\|g\|_{H^\frac12(\Gamma_j)}}\lesssim
\sup\limits_{g\in H_h^\frac12(\Gamma_j)}\frac{<\mu,\gamma^{-1}g>_j}{\|\gamma^{-1}g\|_{H_0^1(\Omega)}}
\leq\sup\limits_{v\in S_0^h(\Omega)}\frac{<\mu,v>_j}{\|v\|_{H_0^1(\Omega)}}.
\end{eqnarray*}
This ensures that the saddle point problem is well-posed. And it is straight that $\hat w_{H,h}^j$ is the solution of this global problem.

Let us recall the residual equation (\ref{errequ}) and the "local residual" equations (\ref{errequ2}). For the previously defined fine mesh $T^h(\Omega)$ and the associated finite element space $S_0^h(\Omega)$, their fine mesh Galerkin approximations are as follows.
Find $\hat w_H\in S_0^h(\Omega)$ and $\hat w_H^j\in S_0^h(\Omega)$, $j=1,2,\cdots,N$, such that
\begin{equation}\label{errequh}
a(\hat w_H,v)=(f,v)-a(u_H,v),\quad\forall v\in S_0^h(\Omega),
\end{equation}
and
\begin{equation}\label{errequh2}
a(\hat w^j_H,v)=(f,\phi_j v)-a(u_H,\phi_j v),\quad\forall v\in S_0^h(\Omega),\;j=1,2,\cdots,N.
\end{equation}
And we know that $\hat w_H=\sum\limits_{j=1}^N\hat w_H^j$ is the Galerkin approximation of $\hat w$ in $S_0^h(\Omega)$.

If we denote
$$
g_j=\hat w_H^j|_{\Gamma_j},
$$
we know that $\hat w_H^j$ satisfies: $\forall (v,\mu)\in S_0^h(\Omega)\times H_h^{-\frac12}(\Gamma_j)$
\begin{equation}\label{fictitious2}
a(\hat w^j_H,v)+<\zeta^j,v>_j+<\mu,\hat w^j_H-g_j>_j=(f,\phi_j v)-a(u_H,\phi_j v).
\end{equation}
Here, we can verify the Lagrange multiplier $\zeta^j$ satisfies
\begin{equation}\label{331850}
<\zeta^j,v>_j=0,\quad\forall v\in S_0^h(\Omega),
\end{equation}
if we take $\mu=0$ in (\ref{fictitious2}) and compare (\ref{fictitious2}) with (\ref{errequh2}).

If we denote by
$$
e^j_{H,h}=\hat w^j_H-\hat w^j_{H,h},\quad e_{H,h}=\sum\limits_{j=1}^N e^j_{H,h}=u_h-u_{H,h},
$$
the "local error" and the global error of $u_{H,h}$ respectively, then by comparing (\ref{fictitious2}) with (\ref{fictitious}) and taking $\mu=0$, we have
$$
a(e^j_{H,h},v)+<\xi^j,v>_j=0,\quad\forall v\in S_0^h(\Omega),\;j=1,2,\cdots,N,
$$
and
\begin{equation}\label{errorequ}
a(e_{H,h},v)+\sum\limits_{j=1}^N<\xi^j,v>_j=0,\quad\forall v\in S_0^h(\Omega).
\end{equation}

It is clear that for all points $x\in \Omega$, there exists a positive integer $\kappa$, which has nothing to do with $N$ and $x$, such that each $x$ belongs to $\kappa$  different $\Omega_j$ at most. 
By using the previously defined operator $\gamma^{-1}$ and the fact we just stated, we can easily get the following lemma.
\begin{lemma}\label{LE0}{\em The multiplier $\xi^j$ in (\ref{fictitious}) satisfies
$$
\|\xi^j\|_{H_h^{-\frac12}(\Gamma_j)}\lesssim\|\nabla e_{H,h}^j\|_\Omega,
$$
and
$$
\sum\limits_{j=1}^N<\xi^j,v>_j\lesssim \kappa^\frac12(\sum\limits_{j=1}^N\|\xi\|^2_{H_h^{-\frac12}(\Gamma_j)})^\frac12\|v\|_{H^1(\Omega)}.
$$
}
\end{lemma}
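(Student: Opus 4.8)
For the first estimate I would read off the per-subdomain error relation that was derived just above the statement, namely $a(e^j_{H,h},v)+\langle\xi^j,v\rangle_j=0$ for all $v\in S_0^h(\Omega)$, so that $\langle\xi^j,v\rangle_j=-a(e^j_{H,h},v)$. The point is that the operator $\gamma^{-1}$ has already been used to prove, for an arbitrary $\mu\in H_h^{-\frac12}(\Gamma_j)$, the duality bound
$$
\|\mu\|_{H_h^{-\frac12}(\Gamma_j)}\lesssim\sup_{v\in S_0^h(\Omega)}\frac{\langle\mu,v\rangle_j}{\|v\|_{H_0^1(\Omega)}},
$$
so I would simply apply it with $\mu=\xi^j$. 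Substituting the identity above into the numerator and using the Cauchy--Schwarz inequality for $a(\cdot,\cdot)$ together with the norm equivalence $\|v\|_{1,\Omega}\approxeq\|\nabla v\|_\Omega$ on $H_0^1(\Omega)$ gives $\langle\xi^j,v\rangle_j\lesssim\|\nabla e^j_{H,h}\|_\Omega\,\|v\|_{H_0^1(\Omega)}$, from which the first inequality is immediate.

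For the second estimate I would start from the defining duality pairing $\langle\xi^j,v\rangle_j\le\|\xi^j\|_{H_h^{-\frac12}(\Gamma_j)}\,\|v\|_{H_h^\frac12(\Gamma_j)}$, sum over $j$, and apply the discrete Cauchy--Schwarz inequality to factor the sum as $\big(\sum_{j}\|\xi^j\|^2_{H_h^{-\frac12}(\Gamma_j)}\big)^{1/2}\big(\sum_{j}\|v\|^2_{H_h^\frac12(\Gamma_j)}\big)^{1/2}$. The first factor is exactly what appears in the claim, so it remains to dominate the second factor by $\kappa^{1/2}\|v\|_{H^1(\Omega)}$. Here I would invoke the trace inequality $\|v\|_{H^\frac12(\Gamma_j)}\lesssim\|v\|_{H^1(\Omega_j)}$ on each patch and then use the finite-overlap property stated just before the lemma: since the counting function $\sum_j\mathbf 1_{\Omega_j}(x)$ is bounded by $\kappa$ pointwise, summing the local integrals gives $\sum_{j}\|v\|^2_{H^1(\Omega_j)}=\int_\Omega(|\nabla v|^2+|v|^2)\sum_j\mathbf 1_{\Omega_j}\le\kappa\|v\|^2_{H^1(\Omega)}$. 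Combining the two factors yields the asserted bound.

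The only delicate point, and the step I expect to be the main obstacle, is ensuring the trace inequality holds with a single constant independent of the patch $\Omega_j$ and of the mesh size, as demanded by the convention on $\lesssim$. This is where I would use the geometry recorded in (\ref{3281411}): every $\Omega_j$ is a union of a bounded number of coarse elements with $\mbox{diam}(\Omega_j)\approxeq H$ and $\mbox{dist}(\partial D_j,\partial\Omega_j)\approxeq H$, so the patches are shape-regular dilations of finitely many reference configurations, and a standard scaling argument transfers the trace bound on a reference patch to all $\Omega_j$ with one uniform constant. Once this uniformity is in hand, the remainder of the argument is pure bookkeeping with Cauchy--Schwarz and the overlap count.
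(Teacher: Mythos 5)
Your proposal is correct and follows essentially the same route as the paper: the first bound is obtained from the identity $a(e^j_{H,h},v)+\langle\xi^j,v\rangle_j=0$ combined with the inf-sup-type inequality for $\|\cdot\|_{H_h^{-\frac12}(\Gamma_j)}$ proved via $\gamma^{-1}$ (which the paper declares ``quite easy'' and omits, and which you fill in correctly), and the second bound follows the identical chain of duality pairing, trace inequality, Cauchy--Schwarz, and the finite-overlap count $\kappa$. Your closing remark on the uniformity of the trace constant across patches is a legitimate point that the paper glosses over, but it does not change the argument.
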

\begin{proof} The first estimate is quite easy if one notices the property of $\gamma^{-1}$ and we omit its proof. For the second estimate, 
thanks to the definition of $\|\cdot\|_{H_h^{-\frac12}(\Gamma_j)}$,
we have $\forall v\in S_0^h(\Omega)$
\begin{eqnarray*}
&&\sum\limits_{j=1}^N<\xi^j,v>_j\leq\sum\limits_{j=1}^N\|\xi^j\|_{H_h^{-\frac12}(\Gamma_j)}\|v\|_{H^\frac12(\Gamma_j)}\\
&&\qquad\lesssim \sum\limits_{j=1}^N\|\xi\|_{H_h^{-\frac12}(\Gamma_j)}\|v\|_{H^1(\Omega_j)}\leq(\sum\limits_{j=1}^N\|\xi\|^2_{H_h^{-\frac12}(\Gamma_j)})^\frac12(\sum\limits_{j=1}^N\|v\|^2_{H^1(\Omega_j)})^\frac12\\
&&\qquad\leq\kappa^\frac12(\sum\limits_{j=1}^N\|\xi\|^2_{H_h^{-\frac12}(\Gamma_j)})^\frac12\|v\|_{H^1(\Omega)}.
\end{eqnarray*}
\end{proof}

Now let us consider the estimation of each $\|\nabla e_{H,h}^j\|_{\Omega}$. To do so, we notice that
$$
\|\nabla e_{H,h}^j\|_{\Omega}^2=\|\nabla(\hat w_H^j-\hat w_{H,h}^j)\|_{\Omega}^2=\|\nabla\hat w_H^j\|_{\Omega/\Omega_j}^2+\|\nabla(\hat w_H^j-\hat w_{H,h}^j)\|_{\Omega_j}^2.
$$
It is obvious that $e_{H,h}^j|_{\Omega_j}$ satisfies the following equation
$$
a(e_{H,h}^j|_{\Omega_j},v)=0,\quad v\in S_0^h(\Omega_j),\quad e_{H,h}^j|_{\partial\Omega_j}=\hat w_H^j|_{\partial\Omega_j}.
$$
Since $\hat w_H^j\in S_0^h(\Omega)$, we have
$$
\|e_{H,h}^j\|_{1,\Omega_j}\lesssim \|\hat w_H^j\|_{\frac12,\partial\Omega_j}=\|\hat w_H^j\|_{\frac12,\partial(\Omega\backslash\Omega_j)}\lesssim \|\nabla\hat w_H^j\|_{\Omega/\Omega_j}.
$$
Then we know that
\begin{equation}\label{S1}
\|\nabla e_{H,h}^j\|_{\Omega}^2\lesssim\|\nabla\hat w_H^j\|_{\Omega/\Omega_j}^2.
\end{equation}

Being aware of (\ref{S1}), we give the estimate of $\|\nabla e_{H,h}^j\|_{\Omega}$, which plays a crucial role in this section.

\begin{lemma}\label{LE2}
{\em Let us denote
$$
\alpha_d=\left\{\begin{array}{ll}
\frac{c}{|\ln H|^2}, \quad & d=2,\\
\frac{c}{|\ln H|}, & d=3,
\end{array}\right.
$$
where $c>0$ is a positive constant that does not depend on $H$, $h$ and $\Omega_j$. Then we have
$$
\|\nabla e_{H,h}^j\|_{\Omega}^2\lesssim H^{2\alpha_d}\|\nabla\hat w_H^j\|_\Omega^2.
$$
}\end{lemma}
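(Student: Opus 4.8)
The plan is to reduce the statement, via the already-established bound (\ref{S1}), to a single-layer energy-decay estimate for the fine-mesh residual $w:=\hat w_H^j$, namely $\|\nabla w\|_{\Omega\setminus\Omega_j}^2\lesssim H^{2\alpha_d}\|\nabla w\|_\Omega^2$. The structural fact I would exploit is that $w$ is discrete harmonic away from the patch $D_j$: since $\phi_j$ is supported in $D_j$, the right-hand side of (\ref{errequh2}) vanishes for every $v\in S_0^h(\Omega)$ with $v|_{D_j}=0$, so $a(w,v)=0$ for all such $v$. Hence $w$ is the discrete harmonic extension into $\Omega\setminus D_j$ of its own trace on $\partial D_j$ (with zero data on $\partial\Omega$), and the whole question becomes how the Dirichlet energy of such a function spreads from an $O(H)$ patch out toward $\partial\Omega$. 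Note that $H^{2\alpha_d}$ lies only slightly below $1$ when $d=2$ and equals a fixed constant below $1$ when $d=3$; this mild size is exactly what one should expect, since $\Omega_j$ is produced from $D_j$ by a single coarse-mesh layer, i.e.\ by crossing only one dyadic annulus, across which a harmonic function can lose only a logarithmically small relative fraction of its energy in the plane but a fixed fraction in space.

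To quantify this, I would decompose $\Omega\setminus D_j$ into $M\approxeq|\ln H|$ concentric dyadic shells $S_0,S_1,\dots,S_M$ of geometrically increasing width, with $S_0=\Omega_j\setminus D_j$ the innermost one, and set $E_k=\|\nabla w\|_{S_k}^2$. Writing $\|\nabla w\|_{\Omega\setminus\Omega_j}^2=\sum_{k\ge1}E_k$ and $\|\nabla w\|_{\Omega\setminus D_j}^2=\sum_{k\ge0}E_k$, the target inequality is equivalent to the energy-concentration bound $E_0\gtrsim(1-H^{2\alpha_d})\sum_{k\ge0}E_k$, i.e.\ to the assertion that the innermost shell already carries a definite fraction of the total residual energy. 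The engine for this is a Caccioppoli (reverse Poincar\'e) estimate for the discrete harmonic function $w$: testing $a(w,v)=0$ with $v=I_h(\eta^2 w)$ for a cutoff $\eta$ living across two adjacent shells, and using the inverse inequality (A2) together with the interpolation bound (A1) to absorb the commutator $I_h(\eta^2w)-\eta^2w$, yields a comparison of the energies of neighbouring shells. Summing the resulting chain over the $M\approxeq|\ln H|$ shells and tracking the constants is what finally produces the factor $|\ln H|$, and hence $\alpha_d$.

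The separation between the two- and three-dimensional exponents $\alpha_2\approxeq|\ln H|^{-2}$ and $\alpha_3\approxeq|\ln H|^{-1}$ is pinned down by this summation. In the plane the logarithmic potential distributes energy roughly equally among the $\approxeq|\ln H|$ dyadic shells, so the total energy is of order $|\ln H|$ times that of a single shell and the innermost fraction $E_0/\sum_kE_k$ is of order $|\ln H|^{-1}$, giving $1-H^{2\alpha_2}\approxeq|\ln H|^{-1}$; in space the shell energies decay geometrically outward, the innermost shell carries an $O(1)$ fraction, and $H^{2\alpha_3}$ is a genuine constant strictly below $1$. The step I expect to be the main obstacle is the reverse (lower) half of the shell comparison in the planar case, i.e.\ the energy-nonconcentration bound $E_0\gtrsim|\ln H|^{-1}\sum_kE_k$: unlike the Caccioppoli upper bound it is not a direct consequence of Galerkin orthogonality, and it must be obtained from a discrete capacity/Harnack argument on each annulus with a constant that is uniform in $H$, $h$, $k$ and $j$. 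Carrying out the Caccioppoli step without letting the interpolation commutator spoil the sharp logarithmic dependence---so that the exact continuous annulus capacity is reproduced in the finite element setting---is the delicate point; once the uniform shell comparison is available, the telescoping summation together with (\ref{S1}) completes the proof.
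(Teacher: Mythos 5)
Your reduction via (\ref{S1}) to the exterior-energy bound $\|\nabla\hat w_H^j\|^2_{\Omega\setminus\Omega_j}\lesssim H^{2\alpha_d}\|\nabla\hat w_H^j\|^2_\Omega$, and your observation that $\hat w_H^j$ is discrete harmonic outside $D_j$ because $\mathrm{supp}\,\phi_j=D_j$ kills the right-hand side of (\ref{errequh2}), both match the paper exactly, and the qualitative picture you draw (equidistribution of energy over $\approxeq|\ln H|$ dyadic annuli in 2-D, geometric concentration in 3-D) is the correct one. But the proposal stops exactly where the proof has to start: the lower bound $E_0\gtrsim|\ln H|^{-1}\sum_kE_k$ (resp.\ $E_0\gtrsim\sum_kE_k$ in 3-D) \emph{is} the entire content of the lemma, and you defer it to an unspecified ``discrete capacity/Harnack argument'' while yourself acknowledging that Galerkin orthogonality plus Caccioppoli will not produce it. Indeed it cannot: a shell-to-shell Caccioppoli comparison only gives $E_k\lesssim E_{k\pm1}$ with an $O(1)$ constant, and to conclude that the innermost of $\approxeq|\ln H|$ shells carries a $|\ln H|^{-1}$ fraction of the total you would need that constant to be $1+O(|\ln H|^{-1})$, which is precisely the sharp capacity statement you do not prove. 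So there is a genuine gap at the central quantitative step.

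The paper closes this gap by a different mechanism that sidesteps the sharp-constant issue. It slices only the collar $\Omega_j\setminus D_j$ into $M\approxeq H/h$ annular layers $\Omega_j^k$ of width one \emph{fine} mesh $h$ (not dyadic shells reaching out to $\partial\Omega$), starting from $\tilde\Omega_j^0=\Omega\setminus\Omega_j$ and growing inward toward $D_j$. On each layer it tests the discrete harmonicity with $I_h(\psi\hat w_H^j)$ for a cutoff with $|\nabla\psi|\lesssim h^{-1}$, giving $\|\nabla\hat w_H^j\|^2_{\tilde\Omega_j^{k-1}}\lesssim\|\nabla\hat w_H^j\|_{\Omega_j^k}\bigl(\|\nabla\hat w_H^j\|_{\Omega_j^k}+h^{-1}\|\hat w_H^j\|_{\Omega_j^k}\bigr)$. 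The quantitative input your plan is missing is then supplied not by an annulus capacity bound but by a weighted radial Poincar\'e inequality: writing $\hat w_H^j$ in polar/spherical coordinates centred at the $j$th vertex and integrating $\partial_\rho\hat w_H^j$ from $\partial\Omega$ (where $\hat w_H^j$ vanishes, since $\hat w_H^j\in H_0^1(\Omega)$) inward to the layer yields $\|\hat w_H^j\|^2_{\Omega_j^k}\lesssim hH^{d-1}\beta_d(H)\|\nabla\hat w_H^j\|^2_{\tilde\Omega_j^k}$, with $\beta_2(H)=|\ln H|$ and $\beta_3(H)=H^{-1}$ coming from $\int_{\rho\approx H}^{\rho\approx1}\rho^{1-d}\,d\rho$; this boundary-anchored estimate is the sole source of the dimension dependence of $\alpha_d$. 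Combining the two bounds and using Young's inequality gives the per-layer gain $\|\nabla\hat w_H^j\|^2_{\tilde\Omega_j^k}\geq(1+chH^{1-d}\beta_d^{-1}(H))\|\nabla\hat w_H^j\|^2_{\tilde\Omega_j^{k-1}}$, and the product over $M\approxeq H/h$ layers telescopes to $(1+chH^{1-d}\beta_d^{-1}(H))^{-M}\approxeq H^{2\alpha_d}$. To salvage your dyadic-shell route you would still have to import this Poincar\'e inequality (or an equivalent capacity estimate anchored at $\partial\Omega$) to break the scale invariance of the pure Caccioppoli argument; as written, the proposal does not contain a proof.
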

\begin{proof} 
To prove this lemma, we first divide the region $\Omega_j\backslash D_j$ as follows. For example, see Fig. \ref{MYFIG}. 
\begin{center}
 \begin{figure}[ht]
  \centering
   \includegraphics[width=0.8\linewidth]{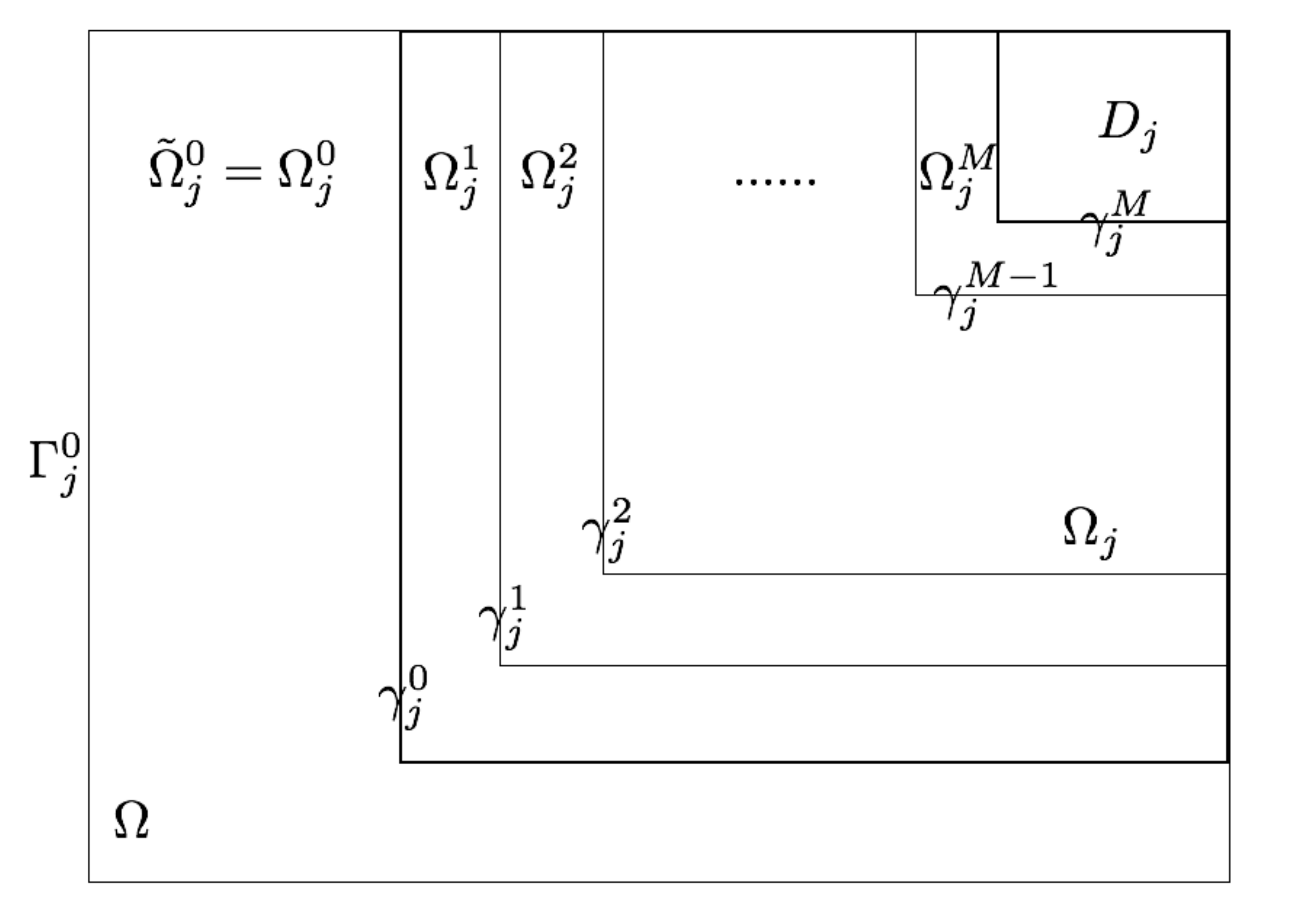}
   \caption{Division of the region $\Omega_j\backslash D_j$}\label{MYFIG}
 \end{figure}
\end{center}
Let us denote
$$
\tilde\Omega_j^0=\Omega_j^0=\Omega\backslash\Omega_j.
$$
We extend the domain $\tilde\Omega_j^0$ along the outward normal direction on $\partial\tilde\Omega_j^0\backslash\partial\Omega$ within $\Omega$ by a fine mesh layer to obtain $\tilde\Omega_j^1\supset\tilde\Omega_j^0$ and denote by $\Omega_j^1$ the incremental annular zone, that is $\Omega_j^1=\tilde\Omega_j^1\backslash\tilde\Omega_j^0$. Repeat the above procedure until we get $\tilde\Omega_j^M=\Omega\backslash D_j$, where $M\approxeq \frac{H}{h}$. 
Then we obtain a series of subdomains
$$
\tilde\Omega_j^0\subset\subset\tilde\Omega_j^1\subset\subset\cdots\subset\subset\tilde\Omega_j^M,
$$
and a series of disjoint annular zones
$$
\Omega_j^0,\Omega_j^1,\cdots,\Omega_j^M.
$$
It is clear that
$$
\tilde\Omega_j^k=\bigcup\limits_{i=0}^k\Omega_j^i,\quad k=0,1,2,\cdots,M.
$$
In what follows, we denote
$$
\partial\tilde\Omega_j^k=\gamma_j^k\cup\Gamma_j^k,\quad\gamma_j^k=\partial\tilde\Omega_j^k/\partial\Omega\quad\mbox{and}\quad\Gamma_j^k=\partial\tilde\Omega_j^k\backslash\gamma_j^k,\quad k=0,1,2,\cdots,M.
$$

Since
$$
a(\hat w_H^j,v)_{\tilde\Omega_j^k}=0,\quad\forall v\in S^h_0(\tilde\Omega_j^k),\quad k=1,2,\cdots,M,
$$
and $\tilde\Omega_j^k=\tilde\Omega_j^{k-1}\cup\Omega_j^k$, we know that
\begin{equation}\label{328942}
a(\hat w_H^j,v)_{\tilde\Omega_j^{k-1}}=-a(\hat w_H^j,v)_{\Omega_j^k},\quad v\in S_0^h(\tilde\Omega_j^k).
\end{equation}


We define a smooth function $\psi\in S^h(\tilde\Omega_j^k)$ such that $\psi|_{\gamma_j^k}=0$ for $k\geq 1$ and
$$
\mbox{supp}\,\psi=\tilde\Omega_j^k,\quad \psi(x)\equiv 1\;\forall x\in\tilde\Omega_j^{k-1},\quad
0\leq\psi\leq 1\quad\mbox{and}\quad|\nabla\psi(x)|\lesssim h^{-1}.
$$
By taking
$$
v=I_h(\psi\hat w_H^j)\in S_0^h(\tilde\Omega_j^k),
$$
in (\ref{328942}), we derive
\begin{eqnarray}\label{3281446}
\hskip-1cm\|\nabla\hat w_H^j\|^2_{\tilde\Omega_j^{k-1}}&&=a(\hat w_H^j,\hat w_H^j)_{\tilde\Omega_j^{k-1}}=-a(\hat w_H^j,I_h(\psi\hat w_H^j))_{\Omega_j^k}\\
\nonumber &&
\leq \|\nabla\hat w_H^j\|_{\Omega_j^k}\|\nabla I_h(\psi\hat w_H^j)\|_{\Omega_j^k}\lesssim \|\nabla\hat w_H^j\|_{\Omega_j^k}\|\nabla (\psi\hat w_H^j)\|_{\Omega_j^k}.
\end{eqnarray}

For $\|\nabla(\psi\hat w_H^j)\|_{\Omega_j^k}$, we have
\begin{equation}\label{02031123}
\|\nabla(\psi\hat w_H^j)\|_{\Omega_j^k}\leq\|\psi\nabla\hat w_H^j\|_{\Omega_j^k}+\|\hat w_H^j\nabla\psi\|_{\Omega_j^k}
\leq\|\nabla\hat w_H^j\|_{\Omega_j^k}+h^{-1}\|\hat w_H^j\|_{\Omega_j^k}.
\end{equation}

Now let us estimate $\|\hat w_H^j\|_{\Omega_j^k}$. To do so, we introduce the following polar or spherical coordinates $(\rho,\omega)$ with origin at $j$th vertex with respect to 2-D and 3-D case, respectively. Here $\omega=\omega_1$ in 2-D case and $\omega=(\omega_1,\omega_2)$ in 3-D case. The Jacobi determinant of the transformation between the Cartesian coordinate $(x_1,\cdots, x_d)$ and $(\rho,\omega)$ is
$$
J=\frac{D(x_1,\cdots,x_d)}{D(\rho,\omega_1\cdots,\omega_{d-1})}=\rho^{d-1}\delta(\omega),\quad \delta(\omega)=\sin^{d-2}\omega_1.
$$

In the polar or spherical coordinates, let us denote
$$
\Gamma_j^k:\rho(\omega)=\rho_j^k(\omega),\quad \gamma_j^k:\rho(\omega)=\tilde\rho_j^k(\omega),\quad 0\leq k\leq M.
$$
It is obvious that 
$$
\tilde\rho_j^k(\omega)\approxeq H\quad\mbox{and}\quad H\lesssim\rho_j^k(\omega)\lesssim 1.
$$

For $1 \leq k\leq M$ and any point $(\rho,\omega)\in\Omega_j^k$, noting $\hat w_H^j\in H_0^1(\Omega)$ and $\Omega$ is a convex domain, we have
\begin{eqnarray*}
&&|\hat w_H^j(\rho,\omega)|=|\int_{\rho_j^k(\omega)}^{\rho}\frac{\partial\hat w_H^j}{\partial\rho}d\rho|
\leq (\int_{\rho_j^k}^{\tilde\rho_j^k}\frac{1}{\rho^{d-1}}d\rho)^\frac12(\int^{\tilde\rho_j^k}_{\rho_j^k}\rho^{d-1}|\frac{\partial\hat w_H^j}{\partial\rho}|^2d\rho)^\frac12\\
&&\qquad \lesssim \beta_d^\frac12(H)(\int^{\tilde\rho_j^k}_{\rho_j^k}\rho^{d-1}|\frac{\partial\hat w_H^j}{\partial\rho}|^2d\rho)^\frac12,
\end{eqnarray*}
where $\beta_d(H)=H^{-1}$ when $d=3$ and $\beta_d(H)=|\ln H|$ when $d=2$. Thus
\begin{eqnarray*}
&&\|\hat w_H^j\|_{\Omega_j^k}^2=\int_{\Omega_j^k}\rho^{d-1}|\hat w_H^j(\rho,\omega)|^2\delta(\omega)d\rho d\omega\\
&&\qquad\lesssim \int_{\Omega_j^k}\rho^{d-1}\beta_d(H)(\int_{\rho_j^k}^{\tilde\rho_j^k}\rho^{d-1}|\frac{\partial\hat w_H^j}{\partial\rho}|^2 d\rho)
\delta(\omega)d\rho d\omega\\
&&\qquad\lesssim hH^{d-1}\beta_d(H)\|\nabla\hat w_H^j\|^2_{\tilde\Omega_j^k}.
\end{eqnarray*}
Since $\|\nabla\hat w_H^j\|_{\tilde\Omega_j^k}\leq\|\nabla\hat w_H^j\|_{\Omega_j^k}+\|\nabla\hat w_H^j\|_{\tilde\Omega_j^{k-1}}$ and $h^{-\frac12}H^\frac{d-1}{2}\beta_d^\frac12(H)>1$, combining the above estimate with (\ref{3281446}) and (\ref{02031123}) admits
\begin{eqnarray*}
&&\|\hat w_H^j\|_{\tilde\Omega_j^{k-1}}^2\lesssim \|\nabla\hat w_H^j\|_{\Omega_j^k}(\|\nabla\hat w_H^j\|_{\Omega_j^k}+h^{-\frac12}H^\frac{d-1}{2}\beta_d^\frac12(H)\|\nabla\hat w_H^j\|_{\tilde\Omega_j^k})\\
&&\qquad\leq \|\nabla\hat w_H^j\|_{\Omega_j^k}^2+h^{-\frac12}H^\frac{d-1}{2}\beta_d^\frac12(H)\|\nabla\hat w_H^j\|_{\Omega_j^k}(\|\nabla\hat w_H^j\|_{\Omega_j^k}+\|\nabla\hat w_H^j\|_{\tilde\Omega_j^{k-1}})\\
&&\qquad\lesssim h^{-\frac12}H^\frac{d-1}{2}\beta_d^\frac12(H)\|\nabla\hat w_H^j\|_{\Omega_j^k}^2+h^{-\frac12}H^\frac{d-1}{2}\beta_d^\frac12(H)\|\nabla\hat w_H^j\|_{\Omega_j^k}\|\nabla\hat w_H^j\|_{\tilde\Omega_j^{k-1}}.
\end{eqnarray*}
By Young's inequality, we have
$$
\|\hat w_H^j\|_{\tilde\Omega_j^{k-1}}^2\lesssim h^{-1}H^{d-1}\beta_d(H)\|\nabla\hat w_H^j\|_{\Omega_j^k}^2,
$$
or
$$
\|\nabla\hat w_H^j\|_{\tilde\Omega_j^k}^2\geq chH^{1-d}\beta_d^{-1}(H)\|\nabla\hat w_H^j\|_{\tilde\Omega_j^{k-1}}^2,
$$
where $c>0$ is a constant that does not depend on $H$, $h$, $j$ and $k$. 

By using the last inequality successively, we get
\begin{eqnarray*}
&&\|\nabla\hat w_H^j\|_{\tilde\Omega_j^M}^2=\|\nabla\hat w_H^j\|_{\tilde\Omega_j^{M-1}}^2+
\|\nabla\hat w_H^j\|_{\Omega_j^M}^2\geq (1+chH^{1-d}\beta_d^{-1}(H))\|\nabla\hat w_H^j\|_{\tilde\Omega_j^{M-1}}^2\\
&& \qquad\geq\cdots\geq (1+chH^{1-d}\beta_d^{-1}(H))^M\|\nabla\hat w_H^j\|_{\tilde\Omega_j^0}^2.
\end{eqnarray*}
Hence
$$
\|\nabla\hat w_H^j\|_{\tilde\Omega_j^0}^2\leq (1+chH^{1-d}\beta_d^{-1}(H))^{-M}\|\nabla\hat w_H^j\|_{\tilde\Omega_j^M}^2.
$$

Noting $M\approxeq\frac{H}{h}$ and $\|\nabla\hat w_H^j\|_{\tilde\Omega_j^M}\leq\|\nabla\hat w_H^j\|_{\Omega}$, simple calculation shows that
$$
(1+chH^{1-d}\beta_d^{-1}(H))^{-M}\approxeq H^{2\alpha_d}.
$$
This, together with (\ref{S1}), concludes the proof of the lemma.
\end{proof}

Following the result in Lemma \ref{LE2}, to estimate $\|\nabla e_{H,h}^j\|_{\Omega}^2$, we have to give some estimates of the "local residual" $\hat w_H^j$.
\begin{lemma}\label{LE4} {\em Suppose the assumptions A1, A2 and A3 are valid. Then for $j=1,2,\cdots,N$, we have
$$\|\nabla\hat w_H^j\|_{\Omega}\lesssim \|\nabla(u-u_H)\|_{D_j}.$$
}
\end{lemma}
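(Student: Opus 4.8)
The plan is to test the discrete local problem (\ref{errequh2}) against $\hat w_H^j$ itself and to rewrite its right-hand side by means of the residual equation (\ref{errequ}). Since $\hat w_H^j\in S_0^h(\Omega)\subset H_0^1(\Omega)$, the product $\phi_j\hat w_H^j$ lies in $H_0^1(\Omega)$ and is therefore an admissible test function in (\ref{errequ}). Comparing (\ref{errequh2}) with $v=\hat w_H^j$ and (\ref{errequ}) with $v=\phi_j\hat w_H^j$ shows that the two right-hand sides coincide, whence
\[
\|\nabla\hat w_H^j\|_\Omega^2=a(\hat w,\phi_j\hat w_H^j)=(\nabla\hat w,\phi_j\nabla\hat w_H^j)_{D_j}+(\nabla\hat w,\hat w_H^j\nabla\phi_j)_{D_j},
\]
where I have used the product rule and the fact that $\mbox{supp}\,\phi_j=D_j$ to localize both integrals to $D_j$. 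The first term is immediately controlled by $\|\nabla\hat w\|_{D_j}\|\nabla\hat w_H^j\|_{D_j}$ because $0\le\phi_j\le1$.

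The crux is the second term, where $|\nabla\phi_j|\lesssim H^{-1}$ produces a factor that naively blows up and would spoil the estimate. I would absorb this factor by a patch-scale Poincar\'e/Friedrichs inequality. For an interior coarse vertex $j$ the hat function $\phi_j$ belongs to $S_0^H(\Omega)\subset H_0^1(\Omega)$, so testing the weak form (\ref{wequ}) and the coarse Galerkin equation (\ref{SGM}) against $\phi_j$ and subtracting gives the orthogonality $a(\hat w,\phi_j)=(\nabla\hat w,\nabla\phi_j)_{D_j}=0$. This lets me replace $\hat w_H^j$ by $\hat w_H^j-c$ for any constant $c$ in the second term at no cost; choosing $c$ to be the mean of $\hat w_H^j$ over $D_j$ and invoking the scaled Poincar\'e inequality $\|\hat w_H^j-c\|_{D_j}\lesssim H\|\nabla\hat w_H^j\|_{D_j}$ (whose constant is $H$-independent by shape regularity and the bounded number of elements in $D_j$) yields
\[
(\nabla\hat w,\hat w_H^j\nabla\phi_j)_{D_j}=(\nabla\hat w,(\hat w_H^j-c)\nabla\phi_j)_{D_j}\lesssim H^{-1}\|\nabla\hat w\|_{D_j}\,H\|\nabla\hat w_H^j\|_{D_j}.
\]
For a boundary vertex $\phi_j\notin S_0^H(\Omega)$ and this orthogonality is unavailable, but then $D_j$ meets $\partial\Omega$ along a set of measure $\approxeq H^{d-1}$ on which $\hat w_H^j$ vanishes, so the Friedrichs inequality $\|\hat w_H^j\|_{D_j}\lesssim H\|\nabla\hat w_H^j\|_{D_j}$ applies directly and gives the same bound.

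Collecting the two terms gives $\|\nabla\hat w_H^j\|_\Omega^2\lesssim\|\nabla\hat w\|_{D_j}\|\nabla\hat w_H^j\|_{D_j}\le\|\nabla\hat w\|_{D_j}\|\nabla\hat w_H^j\|_\Omega$; dividing by $\|\nabla\hat w_H^j\|_\Omega$ and recalling $\hat w=u-u_H$ finishes the proof. I expect the single genuine obstacle to be the treatment of the $\hat w_H^j\nabla\phi_j$ term: the $H^{-1}$ coming from $\nabla\phi_j$ must be matched by an $H$ from a Poincar\'e-type inequality, and making that inequality available is exactly where the Galerkin orthogonality $a(\hat w,\phi_j)=0$ (interior vertices) or the vanishing trace on $\partial\Omega$ (boundary vertices) enters.
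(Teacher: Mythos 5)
Your proof is correct, and after the shared opening move it follows a genuinely different route from the paper's. Both arguments begin by testing (\ref{errequh2}) with $v=\hat w_H^j$ and using the weak form (\ref{wequ}) to obtain $\|\nabla\hat w_H^j\|_\Omega^2=a(u-u_H,\phi_j\hat w_H^j)$, and both must then defuse the factor $|\nabla\phi_j|\lesssim H^{-1}$. The paper does this with the superapproximation computation: it inserts the coarse Galerkin orthogonality against $I_H(\phi_j\hat w_H^j)$, splits $\phi_j\hat w_H^j=\phi_jI_H\hat w_H^j+\phi_j\hat I_H\hat w_H^j$, and estimates $\|\nabla\hat I_H(\phi_jI_H\hat w_H^j)\|_{D_j}$ and $\|\nabla\hat I_H[\phi_j\hat I_H\hat w_H^j]\|_{D_j}$ via A1, A2 and the fact that $\phi_j$ is piecewise linear, so that every $H^{-1}$ coming from $D\phi_j$ is matched by an $H$ from the interpolation error. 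You instead expand $\nabla(\phi_j\hat w_H^j)$ by the product rule and treat the dangerous term $(\nabla(u-u_H),\hat w_H^j\nabla\phi_j)_{D_j}$ by subtracting a constant from $\hat w_H^j$ --- admissible for interior vertices because $\phi_j\in S_0^H(\Omega)$ gives $a(u-u_H,\phi_j)=0$ --- followed by a patch-scaled Poincar\'e inequality, with a Friedrichs inequality taking over on boundary patches where $\hat w_H^j$ vanishes on a boundary portion of measure $\approxeq H^{d-1}$. Your version is more elementary: it dispenses with the interpolation operator and the inverse inequality, needing only the orthogonality of $u-u_H$ against the single coarse function $\phi_j$ and a Poincar\'e/Friedrichs constant $\lesssim H$ on $D_j$ (uniform in $j$ by shape regularity and the bounded number of elements per patch), and it makes explicit exactly where the $H^{-1}\cdot H$ cancellation occurs; the paper's version is the standard mechanism from the local-and-parallel literature and runs uniformly through A1--A2 without any case distinction between interior and boundary vertices. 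Both proofs also remain valid when $u_H$ is replaced by the iterates $u_H^{k,h}$, since the coarse correction (\ref{post2}) restores the coarse Galerkin orthogonality that each argument relies on.
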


\begin{proof}
Thanks to the coercive property in (\ref{CC}), we can get from (\ref{errequ2}) that
\begin{eqnarray*}
\|\nabla\hat w_H^j\|_{\Omega}^2&&=a(\hat w_H^j,\hat w_H^j)=(f,\phi_j\hat w_H^j)-a(u_H,\phi_j\hat w_H^j)\\
&&=a(u-u_H,\phi_j\hat w_H^j)=a(u-u_H,\hat I_H(\phi_j\hat w_H^j))\\
&&=a(u-u_H,\hat I_H(\phi_jI_H\hat w_H^j)+\hat I_H[\phi_j\hat I_H\hat w_H^j]).
\end{eqnarray*}
From the continuity property of the bilinear form $a(\cdot,\cdot)$, we know
\begin{eqnarray*}
&&a(u-u_H,\hat I_H(\phi_jI_H\hat w_H^j)+\hat I_H[\phi_j\hat I_H\hat w_H^j])\\
&&\;\;\lesssim \|\nabla(u-u_H)\|_{D_j}(\|\nabla[\hat I_H(\phi_jI_H\hat w_H^j)]\|_{D_j}+\|\nabla \hat I_H[\phi_j\hat I_H\hat w_H^j]\|_{D_j}).
\end{eqnarray*}
Thanks to {\bf A1}, {\bf A2} and notice that $\phi_j$ is a linear function on each $\tau_\Omega^H$ and $|D\phi_j|\lesssim H^{-1}$,
\begin{eqnarray*}
&&\|\nabla[\hat I_H(\phi_jI_H \hat w_H^j)]\|_{D_j}=(\sum\limits_{\tau_\Omega^H\subset D_j}\|\nabla\hat I_H(\phi_jI_H \hat w_H^j)\|_{\tau_\Omega^H}^2)^\frac12\\
&&\qquad\leq H(\sum\limits_{\tau_\Omega^H\subset D_j}\|D^2(\phi_jI_H\hat w_H^j)\|_{\tau_\Omega^H}^2)^\frac12\\
&&\qquad\lesssim H(\sum\limits_{\tau_\Omega^H\subset D_j}[\|\phi_jD^2(I_H\hat w_H^j)\|_{\tau_\Omega^H}^2+\|D\phi_jD(I_H \hat w_H^j)\|_{\tau_\Omega^H}^2])^\frac12\\
&&\qquad\lesssim H(\sum\limits_{\tau_\Omega^H\subset D_j}[H^{-2}\|D(I_H\hat w_H^j)\|_{\tau_\Omega^H}^2+H^{-2}\|D(I_H\hat w_H^j)\|_{\tau_\Omega^H}^2])^\frac12\lesssim \|\nabla\hat w_H^j\|_{D_j},
\end{eqnarray*}
\begin{eqnarray*}
&&\|\nabla \hat I_H[\phi_j\hat I_H\hat w_H^j]\|_{D_j}\lesssim\|D(\phi_j\hat I_H\hat w_H^j)\|_{D_j}\\
&&\qquad\lesssim \|D\phi_j\hat I_H\hat w_H^j\|_{D_j}+\|\phi_jD(\hat I_H\hat w_H^j)\|_{D_j}\lesssim \|\nabla \hat w_H^j\|_{D_j}.
\end{eqnarray*}

Combination of the above estimates yields
$$
\|\nabla\hat w_H^j\|_{\Omega}\lesssim \|\nabla(u-u_H)\|_{D_j}.
$$

\end{proof}

Now we give the error estimations of the scheme (\ref{alerrequ})-(\ref{final}) in the following theorem.
\begin{theorem}\label{theorem1} {\em Suppose that assumptions A1, A2, A3 and (\ref{ASGM}) hold and $u\in H^{r+1}(\Omega)$. Then
$$
\|\nabla(u-u_H^h)\|_{\Omega}\lesssim h^r+H^{\alpha_d}\|\nabla(u-u_H)\|_{\Omega},
$$
$$
\|u-u_H^h\|_{\Omega}\lesssim h^{r+1}+H\|\nabla(u-u_H^h)\|_{\Omega},
$$
where $\alpha_d>0$ is defined in Lemma \ref{LE2}.
}
\end{theorem}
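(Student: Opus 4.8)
The plan is to compare the final approximation $u_H^h$ with the \emph{fine} mesh standard Galerkin approximation $u_h:=u_H+\hat w_H$, which by (\ref{errequh}) satisfies $a(u_h,v)=(f,v)$ for all $v\in S_0^h(\Omega)$ and is therefore the fine Galerkin solution of (\ref{wequ}). Writing $u-u_H^h=(u-u_h)+(u_h-u_H^h)=(u-u_h)+(e_{H,h}-E_H)$, the first piece is the classical fine Galerkin error, $\|\nabla(u-u_h)\|_\Omega\lesssim h^r$ (the $h$-analogue of (\ref{ASGM}), using $u\in H^{r+1}(\Omega)$ and A1), so the whole problem reduces to controlling $e_{H,h}-E_H$. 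Two structural identities drive the argument. First, subtracting (\ref{errequh}) from (\ref{post2}) gives $a(E_H,v)=a(e_{H,h},v)$ for all $v\in S_0^H(\Omega)$, i.e. $E_H$ is exactly the coarse elliptic (Ritz) projection of $e_{H,h}$, whence by orthogonality $\|\nabla(e_{H,h}-E_H)\|_\Omega\le\|\nabla e_{H,h}\|_\Omega$. Second, (\ref{post2}) together with $u_H^h=u_{H,h}+E_H$ from (\ref{final}) yields $a(u_H^h,v)=a(u_{H,h},v)+a(E_H,v)=(f,v)$ for all $v\in S_0^H(\Omega)$, so that $u_H^h$ obeys the coarse Galerkin orthogonality $a(u-u_H^h,v)=0$ for all $v\in S_0^H(\Omega)$; this is what powers the $L^2$ step.

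For the first (energy) estimate I would bound $\|\nabla e_{H,h}\|_\Omega$ by testing the error equation (\ref{errorequ}) with $v=e_{H,h}\in S_0^h(\Omega)$, which gives $\|\nabla e_{H,h}\|_\Omega^2=-\sum_{j=1}^N\langle\xi^j,e_{H,h}\rangle_j$. The second estimate of Lemma \ref{LE0} together with the equivalence $\|\cdot\|_{1,\Omega}\approxeq\|\nabla\cdot\|_\Omega$ on $H_0^1(\Omega)$ then yields $\|\nabla e_{H,h}\|_\Omega\lesssim(\sum_j\|\xi^j\|_{H_h^{-1/2}(\Gamma_j)}^2)^{1/2}$. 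From here the first estimate of Lemma \ref{LE0} converts the right side into $(\sum_j\|\nabla e_{H,h}^j\|_\Omega^2)^{1/2}$, Lemma \ref{LE2} extracts the decisive factor $H^{\alpha_d}$, and Lemma \ref{LE4} replaces each $\|\nabla\hat w_H^j\|_\Omega$ by $\|\nabla(u-u_H)\|_{D_j}$. The one nontrivial combinatorial point is summing the localized energies: since every point of $\Omega$ lies in at most $\kappa$ of the sets $D_j\subset\Omega_j$, one has $\sum_j\|\nabla(u-u_H)\|_{D_j}^2\le\kappa\|\nabla(u-u_H)\|_\Omega^2$, and because $\kappa$ is a fixed constant this chain collapses to $\|\nabla e_{H,h}\|_\Omega\lesssim H^{\alpha_d}\|\nabla(u-u_H)\|_\Omega$. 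Combining with $\|\nabla(e_{H,h}-E_H)\|_\Omega\le\|\nabla e_{H,h}\|_\Omega$ and the $O(h^r)$ fine error produces the first inequality.

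For the $L^2$ estimate I would run an Aubin--Nitsche duality argument. Let $z\in H_0^1(\Omega)$ solve $a(z,v)=(u-u_H^h,v)$ for all $v\in H_0^1(\Omega)$; by convexity and assumption A3, $\|z\|_{2,\Omega}\lesssim\|u-u_H^h\|_\Omega$. Taking $v=u-u_H^h$ and splitting $z=(z-I_Hz)+I_Hz$ with $I_Hz\in S_0^H(\Omega)$, the second structural identity makes the coarse part vanish, $a(u-u_H^h,I_Hz)=0$, leaving only $a(u-u_H^h,z-I_Hz)$. Bounding this by Cauchy--Schwarz and the interpolation estimate $\|\nabla(z-I_Hz)\|_\Omega\lesssim H\|z\|_{2,\Omega}$ from A1 gives $\|u-u_H^h\|_\Omega^2\lesssim H\|\nabla(u-u_H^h)\|_\Omega\|u-u_H^h\|_\Omega$, hence $\|u-u_H^h\|_\Omega\lesssim H\|\nabla(u-u_H^h)\|_\Omega$, which is in fact slightly sharper than, and certainly implies, the stated second inequality with its harmless additive $h^{r+1}$.

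I do not anticipate a genuine obstacle, since all the analytic difficulty has already been absorbed into Lemmas \ref{LE0}--\ref{LE4}; the remaining work is organizational. The step most easily botched is the bookkeeping of the finite-overlap constant $\kappa$: it enters once through Lemma \ref{LE0} and once more through the summation $\sum_j\|\nabla(u-u_H)\|_{D_j}^2\le\kappa\|\nabla(u-u_H)\|_\Omega^2$, and one must confirm that these factors coalesce into a single $H$-independent constant so that no spurious growth in $N$ appears. The other point requiring care is the verification of the two orthogonality identities for $E_H$ and for $u_H^h$, on which both halves of the proof entirely hinge.
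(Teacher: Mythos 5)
Your proposal is correct and follows essentially the same route as the paper: both reduce the problem to estimating $u_h-u_H^h$, chain Lemmas \ref{LE0}, \ref{LE2}, \ref{LE4} with the finite-overlap constant $\kappa$ for the energy bound, and use an Aubin--Nitsche duality with the coarse-grid orthogonality for the $L^2$ bound. The only (cosmetic) differences are that you handle the correction by identifying $E_H$ as the Ritz projection of $e_{H,h}$ and testing (\ref{errorequ}) directly, whereas the paper derives the combined error equation (\ref{errorequation}) involving $(I-P_H)v$, and your duality argument is run on $u-u_H^h$ with the interpolant rather than on $u_h-u_H^h$ with $P_H$.
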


\proof
First of all, we introduce an $H^1-$orthogonal projection $P_H$ from $H_0^1(\Omega)$ onto $S_0^H(\Omega)$: for given $w\in H^1_0(\Omega)$, find $P_H w\in S_0^H(\Omega)$ such that
$$
a(v,w-P_H w)=0,\quad\forall v\in S_0^H(\Omega).
$$
It is classical that
$$
\|(I-P_H)w\|_{\Omega}\lesssim H\|\nabla w\|_{\Omega},\quad \forall w\in H^1_0(\Omega).
$$

Noticing the definition of $\hat w_{H,h}$ and $\sum\limits_{j=1}^N\phi_j=1$ in $\Omega$, the summation of all the equations of (\ref{fictitious}) with $\mu=0$ gives the equation satisfied by $\hat w_{H,h}$
\begin{equation}\label{1stcorrectionequation}
a(\hat w_{H,h},v) = (f,v)-a(u_H,v)-\sum\limits_{j=1}^N\int_{\Gamma_j}\xi^jvds,\quad\forall v\in S_0^h(\Omega).
\end{equation}
Then we have
$$
a(u_{H,h},v)=(f,v)-\sum\limits_{j=1}^N\int_{\Gamma_j}\xi^jvds,\quad\forall  v\in S_0^h(\Omega).
$$
Furthermore, we rewrite the coarse mesh correction as
$$
a(E_H,v)=(f,P_Hv)-a(u_{H,h},P_Hv),\quad\forall v\in S_0^h(\Omega).
$$

Adding the above two equations leads to 
$$
a(u_H^h,v)=(f,v)+(f,P_Hv)-a(u_{H,h},P_Hv)-\sum\limits_{j=1}^N\int_{\Gamma_j}\xi^jvds,\quad\forall v\in S_0^h(\Omega).
$$

Finally, we obtain
$$
a(u_H^h,v)=(f,v)-\sum\limits_{j=1}^N\int_{\Gamma_j}\xi^j(I-P_H)vds,\quad\forall v\in S_0^h(\Omega),
$$
and
\begin{equation}\label{errorequation}
a(u_h-u_H^h,v)=\sum\limits_{j=1}^N\int_{\Gamma_j}\xi^j(I-P_H)vds,\quad\forall v\in S_0^h(\Omega).
\end{equation}

Thanks to the above error equation of $u_h-u_H^h$, Lemma \ref{LE0}, \ref{LE2} and \ref{LE4}, we can easily get 
$$
\|\nabla(u_h-u_H^h)\|_\Omega\lesssim H^{\alpha_d}\|\nabla(u-u_H)\|_\Omega.
$$
Then we can derive the first result by using the triangle inequality.

For the $L^2-$error estimate, we use the Aubin-Nitsche duality argument. Since {\bf A3}, for $u_h-u_H^h\in L^2(\Omega)$, there exists $\phi\in H^2(\Omega)\cap H_0^1(\Omega)$ such that
$$
a(v,\phi)=(u_h-u_H^h,v),\quad\forall v\in H^1_0(\Omega),
$$
and
$$
\|\phi\|_{2,\Omega}\lesssim \|u_h-u_H^h\|_{\Omega}.
$$
Taking $v=u_h-u_H^h$ and noting (\ref{errorequation}), we have
$$
\|u_h-u_H^h\|_{\Omega}^2=a(u_h-u_H^h,\phi)=a(u_h-u_H^h,(I-P_H)\phi).
$$
Thus
\begin{eqnarray*}
\|u_h-u_H^h\|_{\Omega}^2=&&a(u_h-u_H^h,(I-P_H)\phi)\lesssim \|\nabla(u_h-u_H^h)\|_{\Omega}\|\nabla(I-P_H)\phi\|_{\Omega}\\
\lesssim  &&  H\|\nabla(u_h-u_H^h)\|_{\Omega}\|\phi\|_{2,\Omega}\lesssim H\|\nabla(u_h-u_H^h)\|_{\Omega}\|u_h-u_H^h\|_{\Omega}.
\end{eqnarray*}
By using triangle inequality, this estimate admits the $L^2-$error estimate.
\endproof

From the results in Theorem \ref{theorem1}, we see that one can improve the convergence order of both $H^1$ and $L^2$ errors of the coarse mesh standard Galerkin approximation $u_H$ for $\alpha_d$ order by one two-grid iteration.  And it is easy to verify that all the above lemmas and theorem are valid if we replace $u_H$ by $u_H^h$. This suggests the following two-grid iteration with
\begin{equation}\label{6210959}
K=[\alpha_d^{-1}+0.5]=\left\{\begin{array}{ll}
O(|\ln H|^2),\quad & d=2,\\
O(|\ln H|), & d=3.
\end{array}\right.
\end{equation}
\begin{description}
\item[(Step 0)] Let $k=0$ and solve (\ref{SGM}) to get $u_H\in S_0^H(\Omega)$ and we denote $u_H^{0,h}=u_H$;
\item[(Step 1)] Solve the equations in (\ref{alerrequ}) with $u_H=u_H^{k,h}$ to get $\{\hat w_{H,h}^{j}\}_{j=1}^N$, which are
denoted by $\{\hat w_{H,h}^{k+1,j}\}_{j=1}^N$ here. Then we get $u_{H,h}^{k+1}$ by (\ref{post1});
\item[(Step 2)] Solve (\ref{post2}) with $u_{H,h}=u_{H,h}^{k+1}$ to get $E_H^{k+1}$ and denote
$$
u_H^{k+1,h}=u_{H,h}^{k+1}+E_H^{k+1};
$$
If $k+1> K$, stop the iteration and denote $u_H^h=u_H^{k+1,h}$, which is the final approximation with optimal error. Otherwise, let $k:=k+1$ and goto (Step 1).
\end{description}


\begin{corollary}\label{corollary1} {\em Suppose $u\in H_0^1(\Omega)\cap H^{r+1}(\Omega)$, the final approximation $u_H^h$ of the scheme (Step 0)$\sim$(Step 2) has the following error bounds
\begin{eqnarray}
\label{ErrorH1} && \|\nabla(u-u_H^h)\|_{\Omega}\lesssim h^r+H^{r+1},\\
\label{ErrorL2} && \|u-u_H^h\|_{\Omega}\lesssim h^{r+1}+H^{r+2}.
\end{eqnarray}}
\end{corollary}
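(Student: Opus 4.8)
The plan is to obtain the two final bounds by iterating the one-step result of Theorem~\ref{theorem1} along the cycle (Step~0)$\sim$(Step~2). The decisive preliminary point is that Theorem~\ref{theorem1} may be applied verbatim with $u_H$ replaced by the current iterate $u_H^{k,h}$. This is legitimate because the proof of Theorem~\ref{theorem1} uses the defining property of $u_H$ only through the coarse Galerkin orthogonality $a(u-u_H,v)=0$ for all $v\in S_0^H(\Omega)$ (for instance in Lemma~\ref{LE4}, where the interpolant $I_H(\phi_j\hat w_H^j)$ is dropped), and the coarse-grid correction (\ref{post2}) forces exactly $a(u-u_H^{k,h},v)=0$ for all $v\in S_0^H(\Omega)$, since $u_H^{k,h}=u_{H,h}^{k}+E_H^{k}$ and $a(u_{H,h}^{k}+E_H^{k},v)=(f,v)=a(u,v)$ on $S_0^H(\Omega)$. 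Hence, with a constant independent of $k$, the estimate $\|\nabla(u-u_H^{k+1,h})\|_\Omega\lesssim h^r+H^{\alpha_d}\|\nabla(u-u_H^{k,h})\|_\Omega$ holds for every $k$.

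Writing $\epsilon_k=\|\nabla(u-u_H^{k,h})\|_\Omega$, this is a scalar linear recursion $\epsilon_{k+1}\lesssim h^r+H^{\alpha_d}\epsilon_k$ with starting value $\epsilon_0=\|\nabla(u-u_H)\|_\Omega\lesssim H^r$ by (\ref{ASGM}). Unrolling it gives $\epsilon_{K+1}\lesssim H^{(K+1)\alpha_d}H^r+(1-H^{\alpha_d})^{-1}h^r$. The choice $K=[\alpha_d^{-1}+0.5]$ from (\ref{6210959}) is made precisely so that $(K+1)\alpha_d\gtrsim 1$, whence $H^{(K+1)\alpha_d}\lesssim H$ and the first term is of order $H^{r+1}$; the geometric factor $(1-H^{\alpha_d})^{-1}$ stays of order one for $d=3$ and absorbs only a harmless $|\ln H|$ for $d=2$, so the second term reduces to the fine-grid floor $h^r$. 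This yields (\ref{ErrorH1}). For (\ref{ErrorL2}) I would feed this $H^1$ estimate into the Aubin--Nitsche bound of Theorem~\ref{theorem1}, $\|u-u_H^h\|_\Omega\lesssim h^{r+1}+H\|\nabla(u-u_H^h)\|_\Omega$, so that the two surviving contributions are $h^{r+1}$ and $H\cdot H^{r+1}=H^{r+2}$.

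The main obstacle is controlling the accumulation of constants over the $K+1\approxeq\alpha_d^{-1}$ sweeps. The per-step contraction factor $H^{\alpha_d}$ is only marginally below one (indeed $H^{\alpha_d}\to 1$ as $H\to 0$, since $\alpha_d\to 0$), so one must verify that the implicit constant in the one-step estimate, when raised to the power $K+1$, does not overwhelm the gain $H^{(K+1)\alpha_d}\approxeq H$. This is exactly where the fine calibration of $\alpha_d$ in Lemma~\ref{LE2}, obtained from the exponential estimate $(1+chH^{1-d}\beta_d^{-1}(H))^{-M}\approxeq H^{2\alpha_d}$ with $M\approxeq H/h$, must be shown to interlock with the matching choice of $K$ in (\ref{6210959}), so that $K+1$ iterations produce a net reduction of precisely one power of $H$. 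Once this balance is secured, and together with the reusability of Theorem~\ref{theorem1} established in the first step, the two error bounds follow from the elementary recursion above; pinning down the balancing constant and confirming the reusability rigorously are the only nonroutine parts.
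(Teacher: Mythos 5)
Your proposal follows essentially the same route as the paper: the paper offers no separate proof of Corollary~\ref{corollary1} beyond asserting that Theorem~\ref{theorem1} remains valid with $u_H$ replaced by the iterate and that $K=[\alpha_d^{-1}+0.5]$ sweeps upgrade $\|\nabla(u-u_H)\|_\Omega\lesssim H^r$ to $H^{r+1}$ while the $h^r$ (resp.\ $h^{r+1}$) floor persists, which is exactly your scalar recursion, and your observation that the coarse correction (\ref{post2}) restores the Galerkin orthogonality $a(u-u_H^{k,h},v)=0$ on $S_0^H(\Omega)$ is precisely the justification the paper leaves as ``easy to verify.'' The constant-accumulation point you flag (the factor $C^{K+1}$ with $K\approxeq\alpha_d^{-1}$ competing against $H^{(K+1)\alpha_d}\approxeq H$) is a genuine delicacy of this kind of argument, but the paper does not address it either, so your treatment is no less complete than the source's.
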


It is obvious that, to get the optimal $H^1$ or $L^2$ error, we should configure $H$ and $h$ such that
\begin{equation}\label{CHh}
h\sim H^\frac{r+1}{r}\quad\mbox{or}\quad h\sim H^\frac{r+2}{r+1},
\end{equation}
respectively. 

\section{Numerical Experiments}
In this section, we give some numerical examples to verify the analysis results. For simplicity, in all numerical examples we consider the following piecewise linear finite element spaces, that is $r=1$:
$$
S^H(\Omega)=\{v\in C^0(\Omega): v|_{\tau^H_\Omega}\in P^1_{\tau^H_\Omega},\forall \tau^H_\Omega\in T^H(\Omega)\},\quad S_0^H(\Omega)=S^H(\Omega)\cap H^1_0(\Omega).
$$

According to (\ref{CHh}), to reach the $H^1$ accuracy of the standard Galerkin approximation in $S_0^h(\Omega)$, we choose $H$ and $h$ such that $h\sim H^2$. In this case
\begin{equation}\label{5-1}
\|\nabla(u-u_H^h)\|_{\Omega}=O(H^2).
\end{equation}
On the other hand, to reach the $L^2$ accuracy of the standard Galerkin approximation in $S_0^h(\Omega)$, we choose $H$ and $h$ such that $h\sim H^\frac32$. With such configuration, we have
\begin{equation}\label{5-2}
\|u-u_H^h\|_{\Omega}=O(H^3).
\end{equation}
\begin{figure}[ht]
  \begin{minipage}[t]{0.5\linewidth}
    \centering
    \noindent\includegraphics[width=7cm]{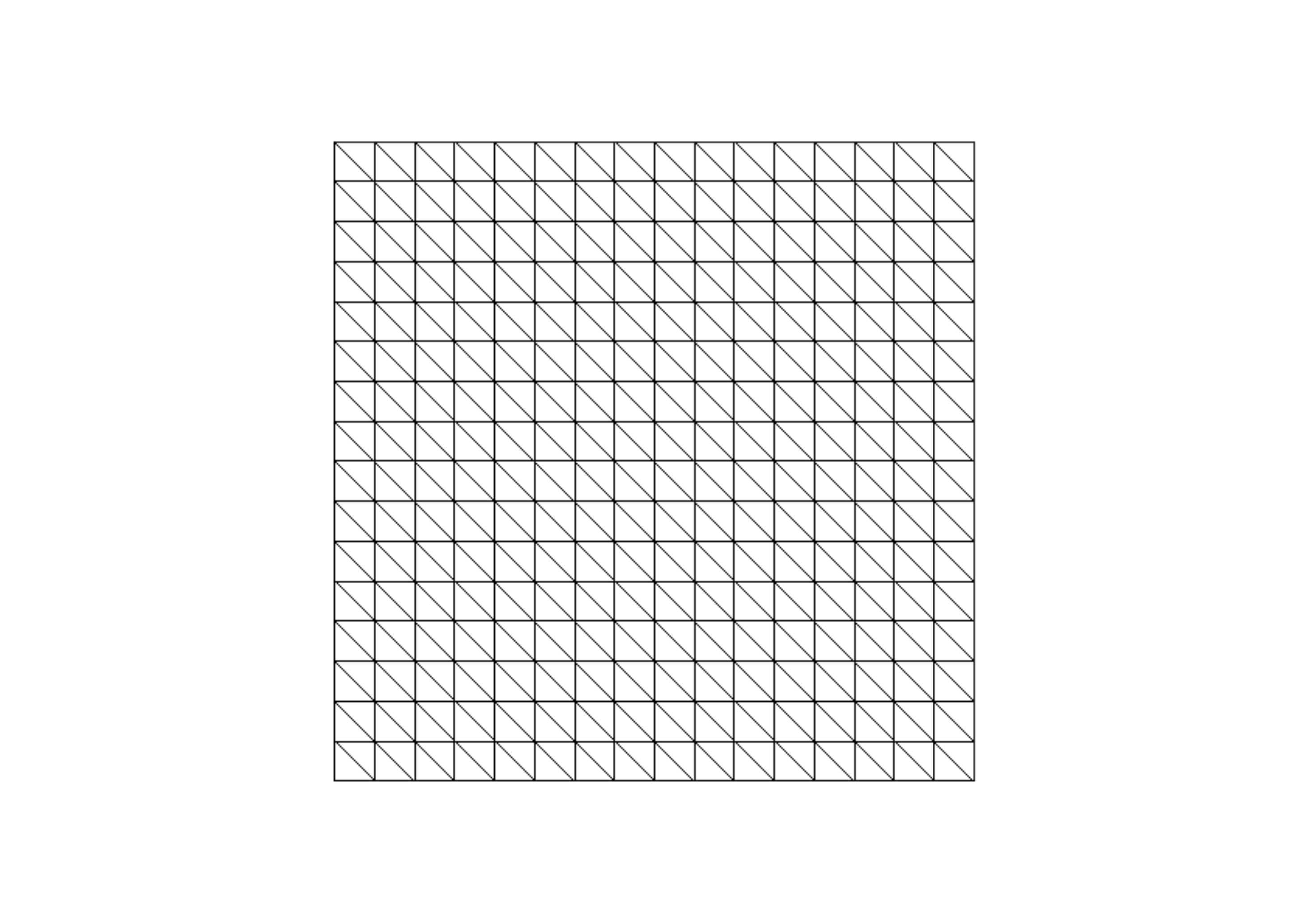}
    \caption{2-D coarse mesh $T^H(\Omega)$}
    \label{fig1}
  \end{minipage}%
  \begin{minipage}[t]{0.5\linewidth}
    \centering
   \noindent \includegraphics[width=7cm]{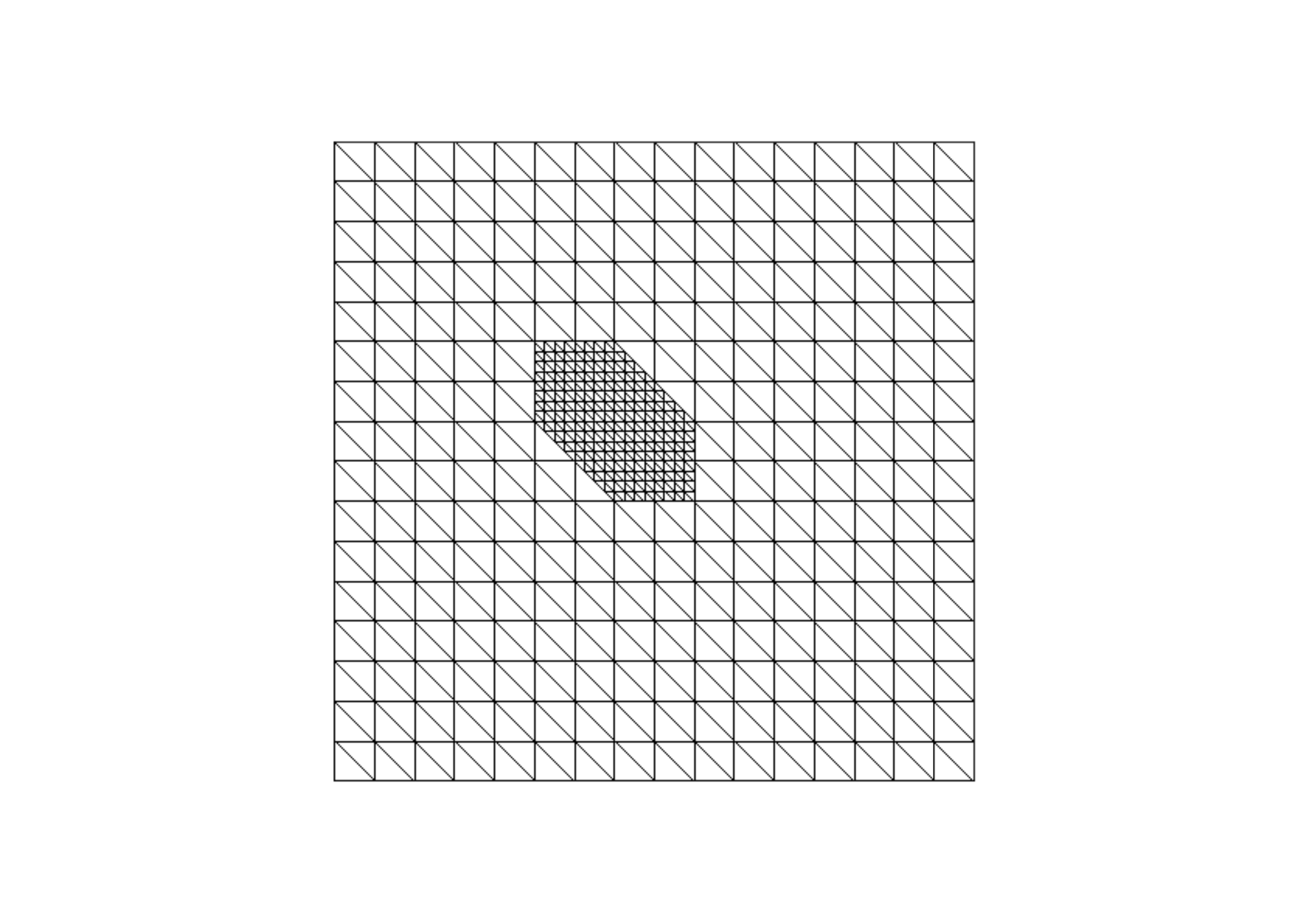}
    \caption{2-D local fine mesh $T^h(\Omega_j)$}
    \label{fig2}
  \end{minipage}
\end{figure}

In the following, we try to verify (\ref{5-1})-(\ref{5-2}) and the efficiency of the proposed iteration scheme (Step 0)-(Step 2) in last section by some numerical experiments. On the one hand, since all the subproblems in (Step 1) are independent with each other once the coarse mesh approximation $u_H$ is closed at hand, there will be no communication cost when solving them simultaneously in parallel computer systems. 
The only thing one should pay attention is the computing of the right hand side in (Step 1) and (Step 2). In (Step 1) we only need the coarse mesh information 
on $D_j$, the support of $\phi_j$. And in (Step 2), for calculating the second term of the right hand side of  (\ref{post2}), we calculate it in each fine mesh element, which dramatically increases the computing time compared with the standard coarse mesh Galerkin method. How to calculate this term efficiently is somehow critical to make the scheme more efficient. 
  
On the other hand, very large parallel computer systems that have huge amount of computing cores give us a possibility to deal with large scale computation and get very accurate approximation. In this case, the scale of the coarse mesh standard Galerkin scheme and the coarse mesh correction problem could be very large and how to solve the coarse mesh problem could be the bottleneck of the entire iterative scheme. Therefore, we should use some parallel solver to cope with the correction step, for example the algebraic multi-grid method which is well known by its efficiency.  In the following numerical experiment, we will compare the numerical performance of the proposed iterative scheme in last section with the fine grid standard Galerkin scheme. For the fine grid standard Galerkin scheme, we use pARMS (parallel Algebraic Multilevel Solver) as the parallel sparse solver.  In the following numerical experiments, the scale of both the fine mesh subproblems and the coarse mesh correction problem are not very large, so we only use direct method for solving them, although some parallel sparse solvers could be applied to the coarse mesh correction step when the scale of the coarse mesh correction is large. And it is shown that the efficiency of the proposed iteration scheme is higher than the fine mesh standard Galerkin scheme with the previously mentioned parallel sparse solver. 

First, we consider two 2-D examples. In these two examples, the domain is the unit square $\Omega=(0,1)\times (0,1)$ with a uniform triangulation $T^H(\Omega)=\{\tau^H_\Omega\}$, see Fig. \ref{fig1}. The Fig. \ref{fig2} is the local fine mesh defined on the $j$th coarse mesh node.

In the first 2-D example, we consider the problem with the following analytic solution
$$
u(x,y)=100(x^2-2x^3+x^4)(y-3y^2+2y^3).
$$
In this case, we can get the exact error of the numerical solution.

In the following Tab. \ref{tab1} and Tab. \ref{tab2}, we give some numerical results according to the above configurations of $H$ and $h$. And in Tab. \ref{tab1.1} and Tab. \ref{tab2.1} we give the CPU time comparison for the fine mesh standard Galerkin method and the scheme proposed in this paper with respect to getting optimal $H^1$ and $L^2$ errors, respectively. That is in Tab. \ref{tab1.1} we show the CPU time used when $h=H^2$ and in Tab. \ref{tab2.1} we show the CPU time used when $h=H^\frac32$. 
For numerical experiments that the true solution $u$ is known, we define the convergence order "$\mbox{ORDER}_1$" with respect to the coarse mesh size $H$ as
$$
\mbox{ORDER}_1(u_{app})=\left\{\begin{array}{ll}
1+\frac{\ln\frac{\|\nabla(u-u_H)\|_{0,\Omega}}{\|\nabla(u-u_{app})\|_{0,\Omega}}}{|\ln H|},\quad & H^1\;\mbox{error order},\\
2+\frac{\ln\frac{\|u-u_H\|_{0,\Omega}}{\|u-u_{app}\|_{0,\Omega}}}{|\ln H|},\quad & L^2\;\mbox{error order}.
\end{array}\right.
$$
The symbol $u_{app}$ stands for certain approximation of $u$ defined in the algorithm. And "Iteration" stands the number of iterations that are taken for obtaining the final approximation.

\begin{table}[ht]
\caption{$H^1-$error ($h=H^2$)}\label{tab1}
\begin{center}
\begin{tabular}{| c | c | c | c |}
\hline
$\frac{1}{H}$ & $8$ & $16$ & $32$\\
\hline
$\|\nabla(u-u_H)\|_{0,\Omega}$ & $6.8371\times 10^{-1}$ & $3.4949\times 10^{-1}$ & $1.7574\times 10^{-1}$\\
\hline
$\|\nabla(u-u_h)\|_{0,\Omega}$ & $8.7995\times 10^{-2}$ & $2.2009\times 10^{-2}$ & $5.5023\times 10^{-3}$\\
\hline
$\|\nabla(u-u_{H,h}^1)\|_{0,\Omega}$ & $1.6292\times 10^{-1}$ & $6.7335\times 10^{-2}$ & $2.3873\times 10^{-2}$\\
\hline
$\|\nabla(u-u_H^h)\|_{0,\Omega}$ & $8.8087\times 10^{-2}$  & $2.2041\times 10^{-2}$ & $5.5291\times 10^{-3}$\\
\hline
$\mbox{ORDER}_1$($u_{H,h}^1$) & $1.69$ & $1.59$ & $1.58$ \\
\hline
$\mbox{ORDER}_1$($u_H^h$) & $1.98$ & $1.9968$ & $1.9981$\\
\hline
Iteration & $2$ & $2$ & $2$\\
\hline
\end{tabular}
\end{center}
\end{table}

\begin{table}[ht]
\caption{CPU time comparison ($h=H^2$, $H^{-1}=32$)}\label{tab1.1}
\begin{center}
\begin{tabular}{| c | c | c | c |}
\hline
NP & $4$ & $8$ & $16$\\
\hline
CPU time ($u_h$) & $1532.71s$ & $940.56s$ & $559.12s$\\
\hline
CPU time ($u_H^h$) & $531.67s$ & $265.92s$ & $133.05$\\
\hline
\end{tabular}
\end{center}
\end{table}

\begin{table}[ht]
\caption{$L^2-$error ($h=H^\frac32$)}\label{tab2}
\begin{center}
\begin{tabular}{| c | c | c | c | c |}
\hline
$\frac{1}{H}$ & $25$ & $36$ & $49$ & $64$\\
\hline
$\|u-u_H\|_{0,\Omega}$ & $3.3784\times 10^{-3}$ & $1.6347\times 10^{-3}$ & $8.8361\times 10^{-4}$ & $5.1832\times 10^{-4}$\\
\hline
$\|u-u_h\|_{0,\Omega}$ & $1.3597\times 10^{-4}$ & $4.5545\times 10^{-5}$ & $1.8145\times 10^{-5}$ & $8.1066\times 10^{-6}$\\
\hline
$\|u-u_H^h\|_{0,\Omega}$ & $1.1498\times 10^{-4}$  & $3.8173\times 10^{-5}$ & $1.5086\times 10^{-5}$ & $7.5306\times 10^{-6}$\\
\hline
$\mbox{ORDER}_1(u_H^h)$ & $3.05$ & $3.05$ & $3.05$ & $3.02$\\
\hline
Iteration & $1$ & $1$ & $2$ & $2$\\
\hline
\end{tabular}
\end{center}
\end{table}

\begin{table}[ht]
\caption{CPU time comparison ($h=H^\frac32$, $H^{-1}=64$)}\label{tab2.1}
\begin{center}
\begin{tabular}{| c | c | c | c |}
\hline
NP & $4$ & $8$ & $16$\\
\hline
CPU time ($u_h$) & $86.56s$ & $63.17s$ & $49.47s$\\
\hline
CPU time ($u_H^h)$ & $119.34s$  & $64.76s$ & $36.12s$\\
\hline
\end{tabular}
\end{center}
\end{table}

The second 2-D example is defined by giving
$$
f=70\log ((x+0.1)(\sin\pi y+1)).
$$ 
In this example, since the exact solution $u$ is unknown, the convergence order of the approximate solution is calculated as 
$$
\mbox{ORDER}_2(u_{app})=\left\{\begin{array}{ll}
\min\{2,1+\frac{\ln\frac{\|\nabla(u_h-u_H)\|_{0,\Omega}}{\|\nabla(u_h-u_{app})\|_{0,\Omega}}}{|\ln H|}\}, & \quad H^1\;\mbox{error order},\\
\min\{3,2+\frac{\ln\frac{\|u_h-u_H\|_{0,\Omega}}{\|u_h-u_{app}\|_{0,\Omega}}}{|\ln H|}\}, & \quad L^2\;\mbox{error order}.
\end{array}\right.
$$
Here $u_h$ is the standard Galerkin approximation in the fine mesh finite element space $S_0^h(\Omega)$ and the symbol $u_{app}$ stands for certain approximation of $u$ defined in the algorithm. Since, for example, the $H^1$ error estimate of the fine mesh standard Galerkin approximation admits the following estimation when $h=H^2$ 
$$
\|\nabla(u-u_h)\|_{0,\Omega}=O(h)=O(H^2),
$$
the ``$\mbox{ORDER}_2(u_{app})$'' calculated by the above formula equals to 2 means
$$
\|\nabla(u_h-u_{app})\|_{0,\Omega}=O(H^2),
$$
therefore
$$
\|\nabla(u-u_{app})\|_{0,\Omega}=O(H^2).
$$

The Tab. \ref{tab3} and Tab. \ref{tab4} give the numerical results of this test problem. 

\begin{table}[ht]
\caption{$H^1-$error ($h=H^2$)}\label{tab3}
\begin{center}
\begin{tabular}{| c | c | c | c |}
\hline
$\frac{1}{H}$ & $8$ & $16$ & $32$\\
\hline
$\|\nabla(u_h-u_H)\|_{0,\Omega}$ & $1.8407\times 10^0$ & $9.7353\times 10^{-1}$ & $4.9548\times 10^{-1}$\\
\hline
$\|\nabla(u_h-u_{H,h}^1)\|_{0,\Omega}$ & $3.1398\times 10^{-1}$ & $1.6695\times 10^{-1}$ & $7.3657\times 10^{-2}$\\
\hline
$\|\nabla(u_h-u_H^h)\|_{0,\Omega}$ & $9.4723\times 10^{-2}$  & $4.1034\times 10^{-2}$ & $9.4765\times 10^{-4}$\\
\hline
$\mbox{ORDER}_2(u_{H,h}^1)$ & $1.85$ & $1.64$ & $1.55$ \\
\hline
$\mbox{ORDER}_2(u_H^h)$ & $2$ & $2$ & $2$\\
\hline
Iteration & $1$ & $1$ & $2$\\
\hline
\end{tabular}
\end{center}
\end{table}

\begin{table}[ht]
\caption{$L^2-$error ($h=H^\frac32$)}\label{tab4}
\begin{center}
\begin{tabular}{| c | c | c | c | c |}
\hline
$\frac{1}{H}$ & $25$ & $36$ & $49$ & $64$\\
\hline
$\|u_h-u_H\|_{0,\Omega}$ & $8.0991\times 10^{-3}$ & $3.9813\times 10^{-3}$ & $2.1717\times 10^{-3}$ & $1.2812\times 10^{-4}$\\
\hline
$\|u_h-u_H^h\|_{0,\Omega}$ & $2.6319\times 10^{-4}$  & $4.5438\times 10^{-6}$ & $1.9168\times 10^{-6}$ & $1.1412\times 10^{-6}$\\
\hline
$\mbox{ORDER}_2(u_H^h)$ & $3$ & $3$ & $3$ & $3$\\
\hline
Iteration & $1$ & $2$ & $2$ & $2$\\
\hline
\end{tabular}
\end{center}
\end{table}

In the rest of this section, we will give two 3-D numerical examples. In these two examples, the domain $\Omega$ is the unit cube $(0,1)^3$. The Fig. \ref{fig3} and \ref{fig4} give some free sketches of the 3-D coarse mesh and associated local fine mesh. 

The first 3-D example is a test problem with the following analytic solution
$$
u(x,y,z)=100(x^2-2x^3+x^4)(y-3y^2+2y^3)(z^3-z).
$$
The numerical results are given in Tab.\ref{tab5} and \ref{tab6}.

\begin{figure}[ht]
  \begin{minipage}[t]{0.5\linewidth}
    \centering
    \includegraphics[width=6cm]{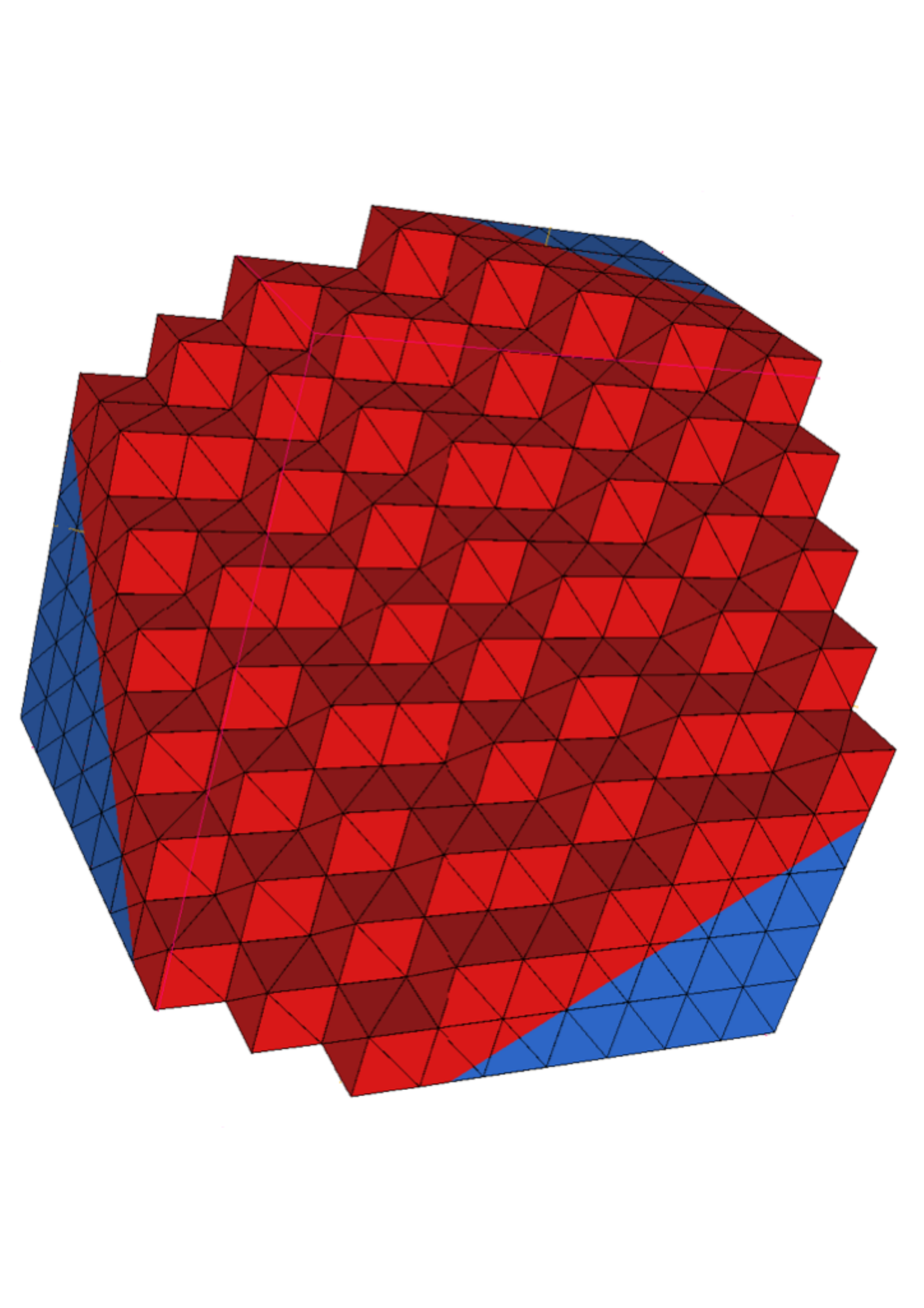}
    \caption{3-D coarse mesh $T^H(\Omega)$}
    \label{fig3}
  \end{minipage}%
  \begin{minipage}[t]{0.5\linewidth}
    \centering
    \includegraphics[width=6cm]{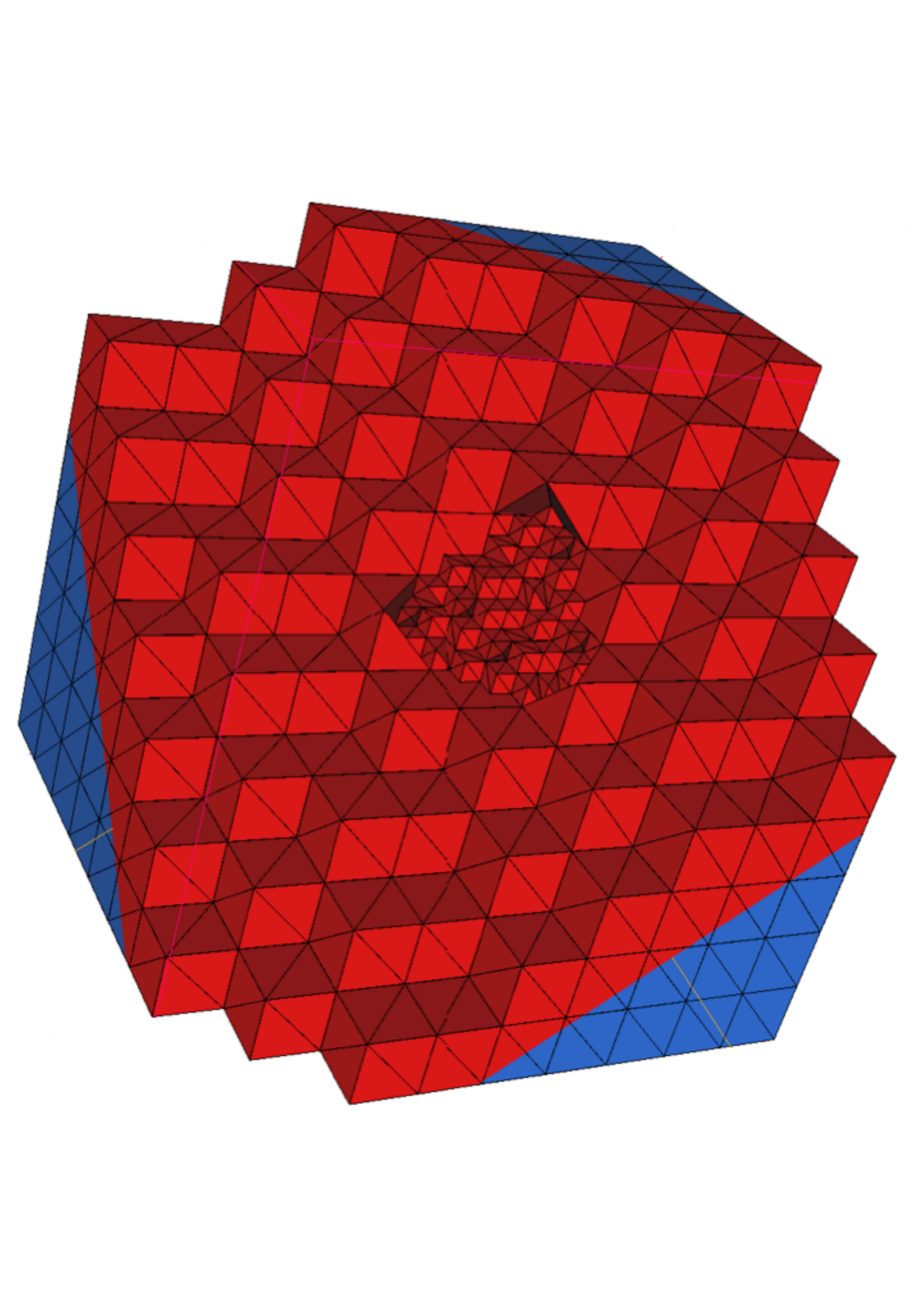}
    \caption{3-D local fine mesh $T^h(\Omega_j)$}
    \label{fig4}
  \end{minipage}
\end{figure}

\begin{table}[ht]
\caption{$H^1-$error ($h=H^2$)}\label{tab5}
\begin{center}
\begin{tabular}{| c | c | c | c |c|}
\hline
$\frac{1}{H}$ & $6$ & $8$ & $10$ & $12$\\
\hline
$\|\nabla(u-u_H)\|_{0,\Omega}$ & $2.9515\times 10^{-1}$ & $2.2774\times 10^{-1}$ & $1.8467\times 10^{-1}$ & $1.5504\times 10^{-1}$\\
\hline
$\|\nabla(u-u_h)\|_{0,\Omega}$ & $5.9048\times 10^{-2}$ & $3.3496\times 10^{-2}$ & $2.1515\times 10^{-2}$ & $1.4969\times 10^{-2}$\\
\hline
$\|\nabla(u-u_{H,h}^1)\|_{0,\Omega}$ & $8.5886\times 10^{-2}$ & $6.2379\times 10^{-2}$ & $4.7159\times 10^{-2}$ & $3.6729\times 10^{-2}$\\
\hline
$\|\nabla(u-u_H^h)\|_{0,\Omega}$ & $5.9436\times 10^{-2}$  & $3.3868\times 10^{-2}$ & $2.1682\times 10^{-2}$ & $1.5110\times 10^{-2}$\\
\hline
$\mbox{ORDER}_1(u_{H,h}^1)$ & $1.69$ & $1.62$ & $1.59$ & $1.58$\\
\hline
$\mbox{ORDER}_1(u_H^h)$ & $1.89$ & $1.92$ & $1.93$ & $1.94$\\
\hline
Iteration & $2$ & $2$ & $2$ & $2$\\
\hline
\end{tabular}
\end{center}
\end{table}

\begin{table}[ht]
\caption{$L^2-$error ($h=H^\frac32$)}\label{tab6}
\begin{center}
\begin{tabular}{| c | c | c | c |}
\hline
$\frac{1}{H}$ & $9$ & $16$ & $25$ \\
\hline
$\|u-u_H\|_{0,\Omega}$ & $8.6338\times 10^{-3}$ & $2.8568\times 10^{-3}$ & $1.1850\times 10^{-3}$ \\
\hline
$\|u-u_h\|_{0,\Omega}$ & $1.2840\times 10^{-3}$ & $2.3929\times 10^{-4}$ & $6.3981\times 10^{-5}$ \\
\hline
$\|u-u_H^h\|_{0,\Omega}$ & $1.3024\times 10^{-3}$  & $2.3795\times 10^{-4}$ & $6.2568\times 10^{-5}$ \\
\hline
$\mbox{ORDER}_1(u_H^h)$ & $2.86$ & $2.90$ & $2.91$ \\
\hline
Iteration & $2$ & $2$ & $2$ \\
\hline
\end{tabular}
\end{center}
\end{table}

The second example of 3-D case is a test problem driven by the following free term
$$
f=70\log((x+0.1)(\sin\pi y+1)(z+0.1)(\sin\pi z+1)),
$$
whose numerical results are given in Tab.\ref{tab7} and \ref{tab8}.

\begin{table}[ht]
\caption{$H^1-$error ($h=H^2$)}\label{tab7}
\begin{center}
\begin{tabular}{| c | c | c | c |c|}
\hline
$\frac{1}{H}$ & $6$ & $8$ & $10$ & $12$\\
\hline
$\|\nabla(u_h-u_H)\|_{0,\Omega}$ & $3.1859\times 10^{0}$ & $2.5719\times 10^{0}$ & $2.1384\times 10^{0}$ & $1.8236\times 10^{0}$\\
\hline
$\|\nabla(u_h-u_{H,h}^1)\|_{0,\Omega}$ & $5.9544\times 10^{-1}$ & $5.1948\times 10^{-1}$ & $4.3693\times 10^{-1}$ & $3.6855\times 10^{-1}$\\
\hline
$\|\nabla(u_h-u_H^h)\|_{0,\Omega}$ & $5.5748\times 10^{-2}$  & $4.0252\times 10^{-2}$ & $2.9719\times 10^{-2}$ & $2.2962\times 10^{-2}$\\
\hline
$\mbox{ORDER}_2(u_{H,h}^1)$ & $1.94$ & $1.77$ & $1.69$ & $1.64$\\
\hline
$\mbox{ORDER}_2(u_H^h)$ & $2$ & $2$ & $2$ & $2$\\
\hline
Iteration & $2$ & $2$ & $2$ & $2$\\
\hline
\end{tabular}
\end{center}
\end{table}

\begin{table}[ht]
\caption{$L^2-$error ($h=H^\frac32$)}\label{tab8}
\begin{center}
\begin{tabular}{| c | c | c | c |}
\hline
$\frac{1}{H}$ & $9$ & $16$ & $25$\\
\hline
$\|u_h-u_H\|_{0,\Omega}$ & $7.7114\times 10^{-2}$ & $2.8568\times 10^{-2}$ & $1.2454\times 10^{-2}$\\
\hline
$\|u_h-u_H^h\|_{0,\Omega}$ & $5.7182\times 10^{-3}$  & $1.5811\times 10^{-3}$ & $4.2787\times 10^{-5}$\\
\hline
$\mbox{ORDER}_2(u_H^h)$ & $3$ & $3$ & $3$\\
\hline
Iteration & $1$ & $1$ & $2$\\
\hline
\end{tabular}
\end{center}
\end{table}

All the above numerical results are obtained by using the public domain software FreeFem++ \cite{freefem++}.

{\bf Remark} {\em From the construction of the partition of unity used in the algorithm, we see that the computational domain of each local subproblem is contained in a ball with radius of $O(H)$. This means that the volume of the computational domain of each local subproblem tends to zero as the coarse mesh size $H$ tends to zero. In this sense, we call the algorithm given in this paper an expandable local and parallel two-grid finite element algorithm. }


\section*{Acknowledgment}
This work is supported by NSFC (Grant No. 11571274 \& 11171269) and the Ph.D. Programs Foundation of Ministry of Education of China (Grant No. 20110201110027). And we would like to thank the anonymous referees for their valuable suggestions and comments. 

\end{document}